\def\@wraptoccontribs#1#2{}
\newcommand{\Comment}[1]{{\color{Brown}#1}}
\newcommand{\ScratchWork}[1]{
 \ifbool{bScratchWork}
 {\begin{quote}\Comment{\footnotesize
 \medskip

 \noindent\textbf{Scratch work (not for submission)}:

 \noindent#1}
 \end{quote}
 }
 {
 }
 }
\newcommand{\UWave}[2][blue]{\bgroup \markoverwith{\textcolor{#1}{\lower3.5\p@\hbox{\sixly \char58}}}\ULon{#2}}
\newcommand{\SOut}[2][red]{\bgroup\markoverwith {\textcolor{#1}{\rule[.45ex]{2pt}{.1ex}}}\ULon{#2}}
\newcommand{\highlight}[2][yellow]{\bgroup\markoverwith {\textcolor{#1}{\rule[-.2em]{2pt}{1.2em}}}\ULon{#2}}
\newcommand{\loc}{{\scriptstyle{\mathrm{loc}}}}
\renewcommand{\epsilon}{\varepsilon}
\newcommand{\eps}{\epsilon}
\renewcommand{\leq}{\leqslant}
\renewcommand{\geq}{\geqslant}
\renewcommand{\le}{\leqslant}
\renewcommand{\ge}{\geqslant}
\newcommand{\R}{\mathbb{R}}
\newcommand{\N}{\mathbb{N}}
\newif\iftextstyle
\everydisplay\expandafter{\the\everydisplay\textstylefalse}
\DeclarePairedDelimiterX{\norm}[1]{\lVert}{\rVert}{#1}
\DeclarePairedDelimiterX{\abs}[1]{\lvert}{\rvert}{#1}
\numberwithin{equation}{section}
\newtheorem{theorem}{Theorem}[section]
\newtheorem{lemma}[theorem]{Lemma}
\newtheorem{prop}[theorem]{Proposition}
\newtheorem{corollary}[theorem]{Corollary}
\newtheorem*{theorem*}{Theorem}
\newtheorem*{lemma*}{Lemma}
\newtheorem*{proposition*}{Proposition}
\newtheorem*{corollary*}{Corollary}
\newtheorem{definition}[theorem]{Definition}
\newtheorem{remark}[theorem]{Remark}
\newtheorem*{remark*}{Remark}
\newtheoremstyle{cases}
 {\smallskipamount}
 {\smallskipamount}
 {}
 {\parindent}
 {\itshape}
 {.}
 {.5em}
 {\thmnote{#1 #2: #3}}
\theoremstyle{cases}
\newcommand{\Ignore}[1]{}
\newcommand{\iny}{\ensuremath{\infty}}
\DeclareMathOperator{\dv}{div} %
\DeclareMathOperator{\curl}{curl} %
\DeclareMathOperator*{\supp}{supp} %
\newcommand{\real}{\mathbb{R}}
\DeclarePairedDelimiterX\set[1]{\{}{\}}{#1}
\newcommand{\n}{\bm{n}}
\DeclarePairedDelimiterX\innp[2]{(}{)}{#1,#2}
\newcommand{\e}{\bm{\mathrm{e}}}
\newtcolorbox{TitledBox}[2][]{
 myimage, 
 coltitle=black, 
 colbacktitle=white, 
 title=My title,
 attach boxed title to top center={
 yshift=-3mm,
 yshifttext=-1mm},
 attach boxed title to top left={
 xshift=1cm,
 yshift=-2mm},
 boxed title style={
 size=small},
 title={#2},#1}
\crefname{corollary}{Corollary}{Corollaries} 
\Crefname{corollary}{Corollary}{Corollaries} 
\crefname{lemma}{Lemma}{Lemmas}	 
\crefname{section}{Section}{Sections}
\Crefname{section}{Section}{Sections}
\crefname{appendix}{Appendix}{Appendices}
\Crefname{appendix}{Appendix}{Appendices}
\crefname{theorem}{Theorem}{Theorems}
\Crefname{theorem}{Theorem}{Theorems}
\crefname{prop}{Proposition}{Propositions}
\Crefname{prop}{Proposition}{Propositions}
\crefname{conj}{Conjecture}{Conjectures}
\Crefname{conj}{Conjecture}{Conjectures}
\crefname{definition}{Definition}{Definitions}
\Crefname{definition}{Definition}{Definitions}
\crefname{remark}{Remark}{Remarks}
\Crefname{remark}{Remark}{Remarks}
\crefname{assumption}{Assumption}{Assumptions}
\Crefname{assumption}{Assumption}{Assumptions}
\newcommand{\vare}{\varepsilon}
\newcommand{\lo}{{\scriptstyle{\mathrm{loc}}}}
\newcommand{\tang}{{\scriptstyle{\mathrm{tan}}}}
\newcommand{\dd}{\,\mathrm{d}}
\newcommand{\Ker}{\mathrm{Ker }\,}
\newcommand{\ds}{\displaystyle}
\renewcommand{\hat}{\widehat}
\date\today
\begin{document}

\title
	[Navier BCs, 3D multiply connected]
	{Large time behavior for the 3D Navier-Stokes with Navier boundary conditions}
	
\author[Kelliher, Lacave, Lopes Filho, Nussenzveig Lopes, Titi]{James P. Kelliher$^1$, Christophe Lacave$^2$, Milton C. Lopes Filho$^3$, Helena J. Nussenzveig Lopes$^3$, and
Edriss S. Titi$^{4,}$\!$^{5,}$\!$^6$}
\address{$^1$ Department of Mathematics, University of California, Riverside, 900 University Ave., Riverside, CA 92521, U.S.A.}
\address{$^2$ Univ. Savoie Mont Blanc, CNRS, LAMA, ISTerre, 73000 Chamb\'ery, France}
\address{$^3$ Instituto de Matematica, Universidade Federal do Rio de Janeiro, Caixa Postal 68530, 21941-909, Rio de Janeiro, RJ, Brazil}
\address{$^4$ Department of Applied Mathematics and Theoretical Physics, University of Cambridge, Cambridge CB30WA, UK}
\address{$^5$ Department of Mathematics, Texas A\&M University, College Station, TX 77843, USA}
\address{$^6$ Department of Computer Science and Applied
Mathematics, Weizmann Institute of Science, Rehovot 76100, Israel}

\email{kelliher@math.ucr.edu}
\email{Christophe.Lacave@univ-smb.fr}
\email{mlopes@im.ufrj.br}
\email{hlopes@im.ufrj.br}
\email{titi@math.tamu.edu and Edriss.Titi@maths.cam.ac.uk}

\begin{abstract}
We study the three-dimensional incompressible Navier-Stokes equations in a smooth bounded domain $\Omega$ with initial velocity $u_0$ square-integrable, divergence-free and tangent to $\partial \Omega$. We supplement the equations with the Navier friction boundary conditions $u \cdot \n = 0$ and $[(2Su)\n + \alpha u]_{\tang} = 0$, where $\n$ is the unit exterior normal to $\partial \Omega$, $Su = (Du + (Du)^t)/2$, $\alpha \in C^0(\partial\Omega)$ is the boundary friction coefficient and $[\cdot]_{\tang}$ is the projection of its argument onto the tangent space of $\partial \Omega$. We prove global existence of a weak Leray-type solution to the resulting initial-boundary value problem and exponential decay in energy norm of these solutions when friction is positive. We also prove exponential decay if friction is non-negative and the domain is not a solid of revolution. These two results are well known in the case of Dirichlet boundary condition, but, even if they have been implicitly used for the Navier boundary conditions, the comprehensive analysis is not available in the literature. After carefully studying the Stokes semigroup for such a boundary condition, we use the Galerkin method for existence, Poincar\'{e}-type inequalities, with suitable adaptations to account for the differential geometry of the boundary, and a novel integral Gronwall-type inequality. In addition, in the frictionless case $\alpha = 0$, we prove convergence of the solution to a steady rigid rotation, if the domain is a solid of revolution.
\end{abstract}

\maketitle

\section{Introduction} \label{Int}

Let $\Omega \subset \R^3$ be a bounded, connected, open set with smooth boundary $\partial \Omega$. Fix $\nu > 0$.

We consider the following initial-value problem for the incompressible Navier-Stokes equations with viscosity $\nu$ and Navier friction boundary conditions:
\begin{equation}\label{NSwNBC}
\left\{
\begin{array}{ll}
\partial_t u + (u \cdot \nabla) u = -\nabla p + \nu \Delta u & \text{ in } (0,+\infty) \times \Omega, \\
\dv u = 0 & \text{ in } [0,+\infty) \times \Omega, \\
u \cdot \n = 0 \text{ and } [(2Su)\n + \alpha u]_{\tang} = 0 &\text{ on } (0,+\infty) \times \partial \Omega,\\
u(0,\cdot) = u_0 & \text{ in } \Omega.
\end{array}
\right.
\end{equation}
Above, $u:[0,+\infty) \times \Omega \to \R^3$ is the velocity field, $p$ is the scalar pressure, $Su := [Du + (Du)^t]/2$ is the symmetric part of the Jacobian of $u$, $\alpha: \partial \Omega \to \R$ is a given material friction coefficient, $\n$ refers to the outward unit normal vector to $\partial \Omega$ and the subscript `$\mathrm{tan}$' corresponds to the orthogonal projection onto the tangent space to $\partial \Omega$.

The Navier friction condition was first introduced in 1827 by C. Navier, see \cite{Navier1827}. This boundary condition is often considered in the presence of rough boundaries, see \cites{G-VM10,JM01} and references therein, or in flows with a turbulent layer next to the boundary, see \cite{JLS04}. Physically, it expresses a rough balance between the rate of fluid slip and friction stress at the boundary. In addition, allowing for slip may be regarded as a relaxation of the no-slip boundary condition, which creates stiffness in computational modeling of high-Reynolds number flows, see \cite{GS22a}.

The standard no-slip boundary condition $u=0$ corresponds to formally setting $\alpha = \infty$ above. In this case, global-in-time existence of a weak Leray-Hopf solution, for initial data $u_0$ divergence-free, tangent to the boundary and square-integrable, is due to J. Leray, see \cite{L34a}. Additionally, the exponential decay of the solution in $L^2$ follows from the energy inequality, by using the Poincar\'e inequality.

In the case $0< \alpha < \infty$ the literature contains no complete proof, aside from some special cases, of existence of weak (Leray-Hopf) solutions or discussion of large time behavior. Global existence of weak solutions was proved in the case $\alpha = 0$ in \cites{FG2022DCDS,Watanabe2003}. A sketch of a proof for $0 \leq \alpha < \infty$ was given in \cite{IS2011}. The rigorous analysis for global existence was presented for several boundaries conditions in \cite{BoyerFabrie2013} but the Navier boundary condition is not covered. In \cites{AACG2021,AEG2021} local well-posedness of strong solutions was established using semigroup methods and mild solutions, i.e. in which the equations are written in integral form, under appropriate regularity hypotheses on $\alpha$ and assuming the initial velocity is in $L^r$, $r \geq 3$. Global existence, under these conditions, was obtained for small data. For two-dimensional domains, Clopeau, Mikeli{\'c} and Robert proved global well-posedness of weak solutions and convergence of the vanishing viscosity limit in \cite{CMR}. Their analysis is based on the vorticity $\omega:=\curl u$ and the stream function $\psi:=\Delta^{-1}_{0}\omega$ (solution of the Laplace problem with the Dirichlet boundary condition) and hence applied only to 2D, simply connected, smooth, bounded domains. Taking into account harmonic vector fields and circulations, the first author extended their result to domains with holes in \cite{KNavier}. The goal here is to present a complete and self-contained analysis for 3D bounded domains.

 More precisely, let $L^2_{\sigma,\tang}(\Omega)$ be the space of square-integrable solenoidal vector fields on $\Omega$ which are tangent to $\partial \Omega$. We prove existence of a weak solution to $\eqref{NSwNBC}$, assuming that the initial velocity $u_0 \in L^2_{\sigma,\tang}(\Omega)$, and that $\alpha \in C^0(\partial\Omega)$ is a time-independent friction coefficient. The precise notion of weak solution is one of the issues that must be addressed, and will be discussed later. In addition, we establish exponential decay of the solution in energy norm in two scenarios:
\begin{itemize}
\item The friction coefficient $\alpha$ is {\it strictly positive}.
\item The friction coefficient $\alpha$ is {\it non-negative} and the domain $\Omega$ is such that $\Ker(S) = \{0\}$, as an operator on $L^2_{\sigma,\tang}(\Omega)$.
\end{itemize}
Furthermore, if the friction coefficient vanishes identically, we prove that the weak solution decays exponentially to the projection of the initial data onto the kernel of the operator $S$. It is a well-known fact, which we prove for the sake of completeness, that $\Ker(S)$ consists of vector fields which generate a rigid rotation of the domain.

In broad terms, we follow the general strategy developed for similar results when $\alpha = \infty$. Existence is tackled by passing to the limit in a Galerkin approximation and the exponential decay is obtained by combining a Poincar\'{e}-type inequality with an integral Gronwall-type inequality. However, the Navier friction condition introduces a few technical hurdles that must be addressed. Specifically, we must account for the various equivalent weak formulations of the problem, the precise choice of basis for the Galerkin approximation, in a way that is consistent with the Navier condition, and the influence of the geometry of the boundary.

In \cite{BJ21}, H. Baba and M. Jazar studied the long-time behavior of a weak solution to the Navier-Stokes system in a bounded 3D domain with Navier-type boundary conditions. In their work, the friction condition $[(2Su)\n + \alpha u]_{\tang} = 0$ is replaced by assuming vorticity is normal to the boundary. These types of boundary conditions are only equivalent in special cases, namely, if $\Omega$ is a half-space, and $\alpha = 0$, or if $\Omega$ is a sphere of radius $r>0$ and $\alpha = 2/r$. Moreover, their result only applies to initial data which satisfies their boundary condition and lies in the orthogonal complement of the kernel of the Stokes operator within that initial data space. Furthermore, they only obtain algebraic decay of the solution in energy norm. Recently a study on the long-time asymptotics of strong solutions with $\alpha = 0$ has become available, see \cite{BFP24}, which rediscovers some of the details in the present work.

Additional related work on Navier-Stokes and Stokes with Navier-boundary conditions include \cites{SSc1973,Berselli2010,BdVeiga2004} and references therein.

 One important aspect of the literature concerning the Navier friction condition is the vanishing viscosity limit in domains with boundary. This is a classical open problem in the no-slip case and we refer the reader to \cite{BNNT2022,LN2018} for recent accounts of the state-of-the art. In contrast, the corresponding problem with Navier friction boundary conditions is much more treatable, basically because the associated boundary layer is less singular. There is a large literature connected with this problem, beginning with the work of Clopeau {\it et al} on the vanishing viscosity limit in two-dimensional bounded domains, see \cite{CMR}. Concerning three-dimensional bounded domains, there is one result of particular interest here. In \cite{IP2006}, Iftimie and Planas proved convergence of the vanishing viscosity limit for Leray-type weak solutions of \eqref{NSwNBC} in energy norm, up to the time of existence of a strong solution of the Euler equations. The authors of \cite{IP2006} omit the proof of global existence of these Leray-type weak solutions, thus the existence result in the present work completes their analysis. The connection with the geometry of the domain, a central concern in our work, is not discussed in \cite{IP2006}.

The remainder of this work is organized as follows. In Section~\ref{sec2} we derive a weak formulation for problem $\eqref{NSwNBC}$. In Section~\ref{StokesOpSection} we study the Stokes operator with Navier boundary conditions and we prove a spectral theorem for this operator. Section~\ref{sec4} contains the statement and proof of the existence result, including two formulations of the energy inequality. In Section~\ref{sec5} we prove a Poincar\'{e}-type inequality in $H^1_{\sigma,\tang}$, in terms of the symmetric Jacobian of the velocity. In Section~\ref{sec6} we establish exponential decay of weak solutions in the case where $\alpha \geq 0$ and $\Ker(S) = \{0\}$ and in the case $\alpha > 0$ with no further restrictions on the domain. In Section~\ref{sec7}, we prove that velocity fields in $\Ker(S)$ are steady solutions of $\eqref{NSwNBC}$ and that weak solutions with initial data $u_0$ decay exponentially in time to the projection of $u_0$ onto $\Ker(S)$. We collect final remarks and conclusions in Section~\ref{sec8}. Lastly, in an Appendix we prove the integral Gronwall-type inequality applicable to the generalized energy inequalities deduced in Section~\ref{sec4}. This result is key to derive the exponential decay we claimed.

\section{Weak formulation}\label{sec2}

In this section, we derive a weak formulation for \eqref{NSwNBC}. We begin by introducing notation for the function spaces we will be using. As mentioned in Section~\ref{Int}, we denote by $L^2_{\sigma,\tang}(\Omega)$ the space of square-integrable solenoidal vector fields on $\Omega$ which are tangent to $\partial \Omega$ (note that square-integrable, solenoidal fields have a well-defined trace of the normal component in $H^{-1/2}(\partial \Omega)$, see, for example, \cite[Equation (IV.10)]{BoyerFabrie2013}). The space $H^1_{\sigma,\tang}(\Omega)$ corresponds to the vector fields in $L^2_{\sigma,\tang}(\Omega)$ whose first-order weak derivatives are, additionally, square-integrable. The notation $C^\infty_{\sigma,\tang}(\overline{\Omega})$ refers to the $C^\infty$ solenoidal vector fields which are tangent to $\partial \Omega$. Finally, we use $C([0,T);w-L^2_{\sigma,\tang}(\Omega))$ to denote functions which are continuous from $[0,T)$ into $L^2_{\sigma,\tang}(\Omega)$ with the weak topology; $C_{\lo}([0,+\infty);w-L^2_{\sigma,\tang}(\Omega))$ denotes functions continuous from $[0,+\infty)$ into $L^2_{\sigma,\tang}(\Omega)$ with the weak topology, and which belong to $L^\infty([0,T);L^2)$ for every $T>0$.

We will assume, throughout this paper, that $\alpha \in C^0(\partial\Omega)$ is a nonnegative time-independent friction coefficient.

We require some elementary information on differential geometry of surfaces, which we recall below.

For each $p \in \partial \Omega$ let $\n=\n_p$ be the outward unit normal to $\partial \Omega$ at $p$. This induces a map $\n: \partial \Omega \to S^2$, where $S^2$ is the unit sphere in $\R^3$, called the {\it Gauss map}. Its differential is $d\n_p: T_p(\partial \Omega) \to T_p (\partial \Omega)$, using the natural identification $T_{\n(p)}(S^2) \sim T_p (\partial \Omega)$. The map $d\n_p $, the differential of the Gauss map, is called the {\em shape operator} of $\partial \Omega$ and, for each $p$, it is a self-adjoint linear operator. See \cite[Section~3.2, Proposition 1]{Manfredo2016} for details. The eigenvalues of $-d\n_p$ are the {\em principal curvatures} of $\partial\Omega$ at $p$, denoted $k_1=k_1(p)$ and $k_2=k_2(p)$. Let $\lambda$ be defined as follows:
\begin{equation} \label{lambdaofp}
\lambda = \lambda (p) := \max\{|k_1(p)|,|k_2(p)|\}.
\end{equation}

To obtain a weak formulation of \eqref{NSwNBC} first assume that $u$ is a smooth solution of the system and let $\Phi \in C^\infty_c([0,+\infty);C^\infty_{\sigma, \tang}(\overline{\Omega}))$. Taking the inner product of \eqref{NSwNBC} with $\Phi$ and integrating by parts once yields:
\begin{multline} \label{motivwkform}
\int_0^{+\infty} \int_\Omega \{ \partial_t \Phi \cdot u \, + \, [(u \cdot \nabla )\Phi]\cdot u \} \dd x \dd t + \int_\Omega \Phi(0,x)\cdot u_0(x) \dd x \\
 = \nu \left(\int_0^{+\infty}\int_\Omega D\Phi : Du \dd x \dd t - \int_0^{+\infty}\int_{\partial\Omega} \Phi \cdot (Du \, \n) \dd S \dd t\right).
\end{multline}
Above, the notation $A:B$ stands for the trace of the matrix product $AB$ and $\dd S$ is the $2$-dimensional Hausdorff measure on $\partial \Omega$. We will now examine the boundary integral more carefully, using the Navier boundary condition and the differential of the Gauss map.

\begin{lemma} \label{bdrytermNBC}
If $\Phi\in C^\infty_{\sigma, \tang}(\overline{\Omega})$ and if $u \in C^\infty_{\sigma, \tang}(\overline{\Omega})$ satisfies $[(2Su)\n + \alpha u]_{\tang} = 0$ on $\partial \Omega$ then it holds that, on $\partial \Omega$,
\[ \Phi \cdot (Du \, \n) = \Phi \cdot [d\n(u) - \alpha u].
\]

\end{lemma}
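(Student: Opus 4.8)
The plan is to observe that, since $\Phi$ is tangent to $\partial\Omega$, only the tangential part of $Du\,\n$ contributes to the pairing: $\Phi\cdot(Du\,\n)=\Phi\cdot[Du\,\n]_{\tang}$. Thus the lemma reduces to the pointwise identity
\[
[Du\,\n]_{\tang}=d\n(u)-\alpha u\qquad\text{on }\partial\Omega,
\]
which, once established, immediately gives the claim upon taking the inner product with $\Phi$.

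To prove this identity I would split the Jacobian into symmetric and antisymmetric parts via $Du\,\n=(2Su)\n-(Du)^t\n$. The Navier condition yields at once $[(2Su)\n]_{\tang}=-\alpha[u]_{\tang}=-\alpha u$, so it remains to compute $[(Du)^t\n]_{\tang}$. For that I would extend $\n$ to a tubular neighborhood of $\partial\Omega$ as $\n=\nabla d$, with $d$ the signed distance to $\partial\Omega$; this extension is smooth near $\partial\Omega$, has $|\n|\equiv 1$ there, and has symmetric Jacobian $D\n$ whose restriction to $T_p(\partial\Omega)$ coincides with the differential of the Gauss map $d\n_p$. A short index computation then gives, on $\partial\Omega$,
\[
(Du)^t\n=\nabla(u\cdot\n)-(D\n)u .
\]
Since the constraint $u\cdot\n\equiv 0$ holds on all of $\partial\Omega$, its tangential gradient vanishes, so $\nabla(u\cdot\n)$ is purely normal there; and $\n\cdot(D\n)u=\tfrac12\,u\cdot\nabla|\n|^2=0$, so $(D\n)u=d\n(u)$ is already tangent. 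Hence $[(Du)^t\n]_{\tang}=-d\n(u)$, and combining,
\[
[Du\,\n]_{\tang}=[(2Su)\n]_{\tang}-[(Du)^t\n]_{\tang}=-\alpha u+d\n(u),
\]
as desired.

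The only real subtlety is the differential-geometric bookkeeping: one must verify that the Euclidean Jacobian of the distance-function extension of $\n$, applied to a tangent vector, genuinely equals the differential of the Gauss map (so that the operator appearing in the computation is precisely the shape operator of the statement), and that $\nabla(u\cdot\n)$ has no tangential component along $\partial\Omega$. Both are standard properties of the signed distance function, but this is exactly the point where the geometry of the boundary enters the weak formulation, and the sign conventions for $D\n$ versus $-d\n_p$ must be tracked carefully so that the final identity comes out with $+d\n(u)$ rather than $-d\n(u)$.
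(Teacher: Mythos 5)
Your proposal is correct and follows essentially the same strategy as the paper: both rest on the decomposition $Du\,\n = (2Su)\n - (Du)^t\n$, the Navier condition to handle the symmetric part, and differentiation of the tangency constraint $u \cdot \n = 0$ to identify $[(Du)^t\,\n]_{\tang} = -\,d\n(u)$. The only difference is in the implementation of that last step — the paper differentiates along curves in $\partial\Omega$ and invokes self-adjointness of the shape operator, while you use the signed-distance extension of $\n$ and the symmetry of its Jacobian, which is the same fact in extrinsic form — and your sign bookkeeping ($D\n$ restricted to $T_p(\partial\Omega)$ equals $d\n_p$, not its negative) is consistent with the paper's conventions.
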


\begin{proof}
Fix $p\in\partial\Omega$. Let $\tau \in T_p(\partial\Omega)$ and choose a curve $\gamma: q=q(s)$ on $\partial\Omega$ such that $q(0)=p$ and $\dot{q}(0)=\tau$. Since we will be working with the outward unit normal vector to $\partial\Omega$ at $q$, for different points $q \in \partial\Omega$, it is convenient, in this proof, to make the dependence of $\n$ on $q$ explicit: $\n=\n_q$.

Since $u\cdot\n_q = 0$ for all $q$ on the curve $\gamma$, and using the definition of the differential of the Gauss map, we find
\[0=\frac{\mathrm{d}}{\mathrm{d}s}(u\cdot\n_q)= (Du \,\dot{q}(s))\cdot \n_q + u \cdot d\n_q(\dot{q}(s)).\]
Recalling that $d\n_q$ is self-adjoint and that $u$ is tangent to $\partial\Omega$, {\em i.e.} $u \in T_q(\partial\Omega)$, we deduce that
\begin{equation} \label{Dutn}
((Du)^t \,\n_q )\cdot \dot{q}(s) = - d\n_q(u)\cdot \dot{q}(s).
\end{equation}
Next we add $(Du \,\n_q )\cdot \dot{q}(s)$ to both sides above and, since $2Su = Du + (Du)^t$, we obtain that
\[ (2Su \,\n_q )\cdot \dot{q}(s) = (Du \,\n_q )\cdot \dot{q}(s)- d\n_q(u)\cdot \dot{q}(s).\]
In particular, at $q(0)=p$, we find
\[ (2Su \,\n_p )\cdot \tau = (Du \,\n_p )\cdot \tau - d\n_p(u)\cdot \tau,\]
and hence
\[ (Du \,\n_p )\cdot \tau = [d\n_p(u) + (2Su \,\n_p )]\cdot \tau .\]

Recall that $\tau$ was chosen as an arbitrary vector in $T_p(\partial\Omega)$. We use the Navier boundary condition satisfied by $u$ to deduce that
\[ (Du \,\n_p )_{\tang} = [ d\n_p(u) - \alpha u ]_{\tang} .\]

This identity is valid at any point $p$ on $\partial\Omega$, so we can now abandon the explicit mention to $p$ in $d\n_p$.
Since the normal component of $\Phi$ vanishes on $\partial\Omega$ it follows that
\[ \Phi \cdot (Du \,\n )= \Phi_{\tang} (Du \,\n )_\tang = \Phi_\tang [ d\n (u) - \alpha u ]_\tang = \Phi \cdot [ d\n (u) - \alpha u ],\]
as we wished.

\end{proof}

In what follows we use $(H^1_{\sigma,\tang}(\Omega))^\prime$ to denote the abstract dual space to $H^1_{\sigma,\tang}(\Omega)$.

We are now ready to introduce the definition of a weak solution to \eqref{NSwNBC}.

\begin{definition} \label{WeakFormWithDu}
Fix $\alpha \in C^0(\partial\Omega)$ and $u_0 \in L^2_{\sigma,\tang}(\Omega)$. Let $u \in C_{\lo}([0,+\infty);w-L^2_{\sigma,\tang}(\Omega)) \cap L^2_\lo((0,+\infty);H^1_{\sigma,\tang}(\Omega))$. In addition, suppose that $\partial_t u \in L^{4/3}_\lo((0,+\infty);(H^1_{\sigma,\tang}(\Omega))^\prime)$. We say $u$ is a {\em weak solution} of \eqref{NSwNBC} with initial data $u_0$ if, for every test vector field $\Phi \in C^\infty_c([0,+\infty);C^\infty_{\sigma, \tang}(\overline{\Omega}))$, it holds that
\begin{multline} \label{WeakFormWithDuIdentity}
\int_0^{+\infty} \int_\Omega \left\{\partial_t\Phi\cdot u\, + \, [(u \cdot \nabla )\Phi]\cdot u \right\}\dd x \dd t + \int_\Omega \Phi(0,x)\cdot u_0(x) \dd x \\
 = \nu\int_0^{+\infty}\left( \int_\Omega D\Phi : Du \dd x + \int_{\partial\Omega} \Phi \cdot [\alpha u - d\n (u)] \dd S\right)\dd t.
\end{multline}

\end{definition}

\section{The Stokes operator} \label{StokesOpSection}

We denote by $\mathbb{P}$ the orthogonal projection of the space of square-integrable vector fields in $\Omega$ onto $L^2_{\sigma,\tang}(\Omega)$, also known as the {\it Leray projector}. To eliminate the pressure from the Navier-Stokes equations, one applies $\mathbb{P}$ to it. Clearly, we have $\mathbb{P}\partial_t u= \partial_t\mathbb{P} u = \partial_t u$, as $u$ is already divergence-free and tangent to the boundary. The nonlinear term becomes $\mathbb{P}[u \cdot \nabla u]$ and the viscous term becomes $\nu \mathbb{P} \Delta u$. As is well-known, the Laplacian does not commute with the Leray projector, despite the fact that $\Delta u$ is divergence free, because the vector field $\Delta u$ is not, in general, tangent to $\partial \Omega$.

The elliptic operator $-\mathbb{P} \Delta$ is called the {\it Stokes operator} and the semigroup it generates is called the {\it Stokes semigroup}. These objects play a key role in the analysis of the viscous flow equations. For flows with no-slip boundary conditions, the natural phase space is $H^1_{\sigma,0}(\Omega)$, and the properties of the Stokes operator and associated semigroup acting on this space are well-understood. For the present work, we require properties of the Stokes operator, when acting on the space of vector fields satisfying the Navier boundary conditions. This is the subject of the present section.

We return to the calculation performed to obtain \eqref{motivwkform} and we concentrate on the Laplacian term. Since we are only concerned with spatial derivatives, we consider time-independent vector fields. More precisely, assume $u$ is a time-independent, smooth, divergence-free vector field on $\Omega$, tangent to $\partial \Omega$, and let $\Phi \in C^\infty_{\sigma,\tang}(\overline{\Omega})$. Integration by parts yields the identity
\begin{equation*} 
 - \int_\Omega \Phi \, \Delta u = \int_\Omega D\Phi : Du
 - \int_{\partial\Omega} \Phi \cdot (Du \, \n) \dd S .
\end{equation*}
Assume, further, that $u$ satisfies the Navier boundary condition $[(2Su)\n + \alpha u]_{\tang} = 0$. Then, using the result in Lemma~\ref{bdrytermNBC} we obtain
\begin{align} \label{Stokes1}
 - \int_\Omega \Phi \, \Delta u = \int_\Omega & D\Phi : Du + \int_{\partial\Omega} \Phi \cdot [\alpha u - d\n(u) ] \dd S .
\end{align}
We see that the right-hand-side above is well-defined for $u \in H^1_{\sigma,\tang}(\Omega)$ and $\Phi \in H^1_{\sigma,\tang}(\Omega)$. We thus introduce the Stokes operator, acting on flows satisfying the Navier boundary conditions, as
\[
\begin{array}{ccc}\mathbb{A}: H^1_{\sigma,\tang}(\Omega) & \to & (H^1_{\sigma,\tang}(\Omega))^\prime \\
 u & \mapsto & \mathbb{A}u
\end{array},
\]
where $\mathbb{A}{u}$ is defined through the duality relation
\begin{equation}\label{StokesOp}
 \langle \mathbb{A}u , v \rangle := \int_\Omega Dv : Du + \int_{\partial\Omega} v \cdot [\alpha u - d\n(u)] \dd S , \quad v \in H^1_{\sigma,\tang}(\Omega).
\end{equation}

\begin{lemma} \label{stokeslemma} The right-hand-side of \eqref{StokesOp} gives rise to a bounded bilinear symmetric operator $B: H^1_{\sigma,\tang}(\Omega) \times H^1_{\sigma,\tang}(\Omega) \to \mathbb{R}$.
\end{lemma}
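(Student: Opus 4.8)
The plan is to verify the three properties directly from the formula
\[
B(u,v) := \int_\Omega Du : Dv + \int_{\partial\Omega} v \cdot [\alpha u - d\n(u)] \dd S,
\]
namely bilinearity, symmetry, and boundedness on $H^1_{\sigma,\tang}(\Omega) \times H^1_{\sigma,\tang}(\Omega)$. Bilinearity is immediate: both the volume term $Du : Dv$ and the boundary integrand $v \cdot [\alpha u - d\n(u)]$ are manifestly linear in $u$ for fixed $v$ and linear in $v$ for fixed $u$, since $\alpha$ is a fixed scalar function and $d\n$ is, at each point, a fixed linear operator on the tangent space. So this step costs nothing.

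For symmetry, the volume term $\int_\Omega Du : Dv$ is visibly symmetric, so the work is to show that the boundary term is symmetric, i.e.\ that
\[
\int_{\partial\Omega} v \cdot [\alpha u - d\n(u)] \dd S = \int_{\partial\Omega} u \cdot [\alpha v - d\n(v)] \dd S.
\]
The $\alpha$-part is trivially symmetric. For the $d\n$-part, I would use that $d\n_p$ is self-adjoint as a linear operator on $T_p(\partial\Omega)$ (recalled in the excerpt, citing Manfredo) together with the fact that $u$ and $v$ are tangent to $\partial\Omega$ on the boundary, so that $v \cdot d\n(u) = d\n(u) \cdot v = u \cdot d\n(v)$ pointwise on $\partial\Omega$; integrating gives the claim. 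Here I would be careful to note that the trace of an $H^1$ vector field tangent to $\partial\Omega$ is genuinely a tangent vector field on $\partial\Omega$ (in $H^{1/2}(\partial\Omega)$), so the pointwise self-adjointness identity is applicable a.e.\ on $\partial\Omega$.

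Boundedness is the only step requiring genuine estimates. The volume term is controlled by $\|Du\|_{L^2}\|Dv\|_{L^2} \le \|u\|_{H^1}\|v\|_{H^1}$ by Cauchy--Schwarz. For the boundary term, I would bound
\[
\left| \int_{\partial\Omega} v \cdot [\alpha u - d\n(u)] \dd S \right| \le \left( \|\alpha\|_{L^\infty(\partial\Omega)} + \sup_{p\in\partial\Omega} \|d\n_p\| \right) \|u\|_{L^2(\partial\Omega)} \|v\|_{L^2(\partial\Omega)},
\]
where $\sup_p \|d\n_p\|$ is finite because $\partial\Omega$ is smooth and compact (indeed it is bounded by $\sup_p \lambda(p)$ with $\lambda$ as in \eqref{lambdaofp}). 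Then the trace theorem $\|w\|_{L^2(\partial\Omega)} \le C_\Omega \|w\|_{H^1(\Omega)}$ for $w \in H^1(\Omega)$ converts these boundary norms into $H^1$ norms, giving $|B(u,v)| \le C \|u\|_{H^1_{\sigma,\tang}}\|v\|_{H^1_{\sigma,\tang}}$ with $C = C(\Omega, \alpha, \nu)$. I do not expect any serious obstacle; the one point that merits a sentence of care is the justification that the boundary integrand is well-defined and integrable, which follows from the trace theorem applied to $u, v \in H^1_{\sigma,\tang}(\Omega)$ together with the continuity of $\alpha$ and smoothness of the Gauss map.
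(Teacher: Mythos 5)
Your proof is correct and follows essentially the same route as the paper: Cauchy--Schwarz for the volume term, the pointwise bound $\|d\n_p\|\leq\lambda(p)$ together with the continuity of the trace $H^1(\Omega)\to L^2(\partial\Omega)$ for the boundary term. You are in fact slightly more complete than the paper, whose proof records only the boundedness estimate and leaves the symmetry (self-adjointness of the shape operator combined with the tangency of the traces of $u$ and $v$) implicit.
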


\begin{proof}
Let $(u,v) \in H^1_{\sigma,\tang}(\Omega) \times H^1_{\sigma,\tang}(\Omega)$ and define $B: H^1_{\sigma,\tang}(\Omega) \times H^1_{\sigma,\tang}(\Omega) \to \mathbb{R}$ by
\[B(u,v) = \int_\Omega Dv : Du + \int_{\partial\Omega} v \cdot [\alpha u - d\n(u)] \dd S.\]

Recall $\lambda$ as introduced in \eqref{lambdaofp}. Then
\begin{align*}
|B(u,v)| & = \left| \int_\Omega Dv : Du + \int_{\partial\Omega} v \cdot [\alpha u - d\n(u)] \dd S \right| \\
& \leq C \|u\|_{H^1} \|v \|_{H^1} + \left(\|\lambda\|_{L^\infty(\partial\Omega)} + \|\alpha\|_{L^\infty(\partial\Omega)}\right)\|u\|_{L^2(\partial\Omega)}\|v\|_{L^2(\partial\Omega)} \\
& \leq (C + \|\lambda\|_{L^\infty(\partial\Omega)} + \|\alpha\|_{L^\infty(\partial\Omega)}) \|u\|_{H^1} \|v \|_{H^1},
\end{align*}
where the last inequality is a consequence of the continuity of the trace of functions in $H^1(\Omega)$ onto $L^2(\partial\Omega)$.
\end{proof}

 Recall the trace inequality (see, for instance, \cite[Theorem III.2.19]{BoyerFabrie2013}),
 \begin{equation}\label{traceineq}
 \|u\|_{L^2(\partial\Omega)}^2 \lesssim \|u \|_{L^2(\Omega)}\|u \|_{H^1(\Omega)}.
 \end{equation}
 Note, also, that a divergence-free vector field $u$ on $\Omega$, which is tangent to $\partial\Omega$, has mean zero for each of its components $u_j$, $j=1,$ $2,$ $3$. This follows easily by integrating by parts $\ds{\int_{\Omega} u \cdot \nabla x_j}$. Recall the Poincar\'e inequality for mean-free functions
 \begin{equation} \label{meanfreePoinc}
 \|u \|_{H^1(\Omega)} \leq c\|Du \|_{L^2(\Omega)}.
 \end{equation}

We claim that there exists $\beta >0$ such that $\widetilde{B}:= B + \beta \mathbb{I}$, with $\mathbb{I}$ denoting the identity operator, is
 {\it coercive} with respect to the $H^1$-norm, {\it i.e.}, there exists $K>0$ such that $\widetilde{B}(u,u) \geq K\|u\|_{H^1}^2$ for all $u \in H^1_{\sigma,\tang}(\Omega)$. Indeed, this follows immediately from the estimate below:
 \begin{align} \label{Buu}
 B(u,u) & = \int_\Omega |Du|^2 + \int_{\partial\Omega} (\alpha |u|^2 - d\n (u)\cdot u) \dd S \nonumber \\
 & \geq \|Du\|_{L^2}^2 - (\|\alpha\|_{L^\infty(\partial\Omega)} + \|\lambda\|_{L^\infty(\partial\Omega)})\|u\|_{L^2(\partial\Omega)}^2 \nonumber \\
 & \geq \|Du\|_{L^2}^2 - (\|\alpha\|_{L^\infty(\partial\Omega)} + \|\lambda\|_{L^\infty(\partial\Omega)})C\|u\|_{L^2(\Omega)}\|Du\|_{L^2(\Omega)} \nonumber \\
 & \geq \frac{1}{2}\|Du\|_{L^2}^2 - \beta\|u\|_{L^2(\Omega)}^2 \nonumber \\
 & \geq K\|u\|_{H^1}^2 - \beta\|u\|_{L^2(\Omega)}^2.
 \end{align}
 Above, in the second inequality we used the trace inequality \eqref{traceineq}. In the third inequality we used Young's inequality with
 \[\beta=\frac{(\|\alpha\|_{L^\infty(\partial\Omega)} + \|\lambda\|_{L^\infty(\partial\Omega)})^2 C^2}{2}.\]
 Finally, in the fourth inequality we used the Poincar\'e inequality \eqref{meanfreePoinc}, where $K=(2c^2)^{-1}$.

 It follows from \eqref{Buu} that $\widetilde{B}$ is a positive-definite bilinear operator on $H^1_{\sigma,\tang}(\Omega)$, thus an inner product. We use the notation
 \begin{equation}\label{tilBinnprod}
 ((u,v)):= \widetilde{B}(u,v)
 \end{equation}
 for this inner product and we note that it gives rise to an equivalent norm on $H^1_{\sigma,\tang}(\Omega)$.

\begin{prop} \label{eigenfunctionsStokes}
There exists a sequence
	$\{v_j\}_{j=1}^\infty \subset H^1_{\sigma,\tang}(\Omega)$ of eigenfunctions of $\mathbb{A}$, with corresponding eigenvalues $\{\lambda_j\}_{j=1}^\infty \subset \R$, which form an orthonormal basis
	of $L^2_{\sigma,\tang}(\Omega)$ and, also, a basis of $H^1_{\sigma,\tang}(\Omega)$ which is orthogonal with respect to the inner product $((\cdot,\cdot))$.
	The eigenvalues, ordered increasingly, satisfy $\lambda_j \to \infty$ as $j \to \infty.$
\end{prop}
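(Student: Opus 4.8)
The plan is to realize $\mathbb{A}$, up to a spectral shift, as the inverse of a compact, self-adjoint, positive operator on $L^2_{\sigma,\tang}(\Omega)$, and then invoke the classical spectral theorem.

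First I would use the boundedness (Lemma~\ref{stokeslemma}) and coercivity, established in \eqref{Buu}, of the shifted form $\widetilde{B} = B + C_\beta\mathbb{I}$ on $H^1_{\sigma,\tang}(\Omega)$ to define a solution operator $T\colon L^2_{\sigma,\tang}(\Omega)\to H^1_{\sigma,\tang}(\Omega)$: for $f\in L^2_{\sigma,\tang}(\Omega)$ let $Tf$ be the unique element of $H^1_{\sigma,\tang}(\Omega)$ satisfying $\widetilde{B}(Tf,v)=(f,v)$ for all $v\in H^1_{\sigma,\tang}(\Omega)$, which exists and is unique by the Lax--Milgram theorem. Testing with $v=Tf$ and using coercivity gives $\|Tf\|_{H^1}^2\lesssim (f,Tf)\leq \|f\|_{L^2}\|Tf\|_{L^2}\leq \|f\|_{L^2}\|Tf\|_{H^1}$, so $T$ is bounded from $L^2_{\sigma,\tang}(\Omega)$ into $H^1_{\sigma,\tang}(\Omega)$. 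Composing with the Rellich--Kondrachov embedding $H^1_{\sigma,\tang}(\Omega)\hookrightarrow L^2_{\sigma,\tang}(\Omega)$, which is compact since $\Omega$ is bounded and smooth (apply the scalar Rellich theorem componentwise, noting that $L^2_{\sigma,\tang}(\Omega)$ is a closed subspace of $L^2(\Omega)^3$), shows that $T$ is a compact operator on $L^2_{\sigma,\tang}(\Omega)$. Symmetry of $\widetilde{B}$ yields $(Tf,g)=\widetilde{B}(Tf,Tg)=(f,Tg)$, so $T$ is self-adjoint, and $(Tf,f)=\widetilde{B}(Tf,Tf)\geq c\|Tf\|_{H^1}^2>0$ for $f\neq 0$, so $T$ is positive and injective.

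Next I would apply the spectral theorem for compact, self-adjoint, positive operators: $T$ has a non-increasing sequence of positive eigenvalues $\mu_j\downarrow 0$, each of finite multiplicity, with eigenfunctions $\{v_j\}$, and since $\Ker T=\{0\}$ these form an orthonormal basis of $L^2_{\sigma,\tang}(\Omega)$. Unwinding $Tv_j=\mu_j v_j$ through the definition of $T$ gives $\widetilde{B}(v_j,v)=\mu_j^{-1}(v_j,v)$ for all $v\in H^1_{\sigma,\tang}(\Omega)$; subtracting $C_\beta(v_j,v)$ and using $\widetilde{B}=B+C_\beta\mathbb{I}$ together with \eqref{StokesOp} gives $\langle\mathbb{A}v_j,v\rangle=(\mu_j^{-1}-C_\beta)(v_j,v)$, so $v_j$ is an eigenfunction of $\mathbb{A}$ with real eigenvalue $\lambda_j:=\mu_j^{-1}-C_\beta$; since $\mu_j\downarrow 0$, after ordering increasingly $\lambda_j\to+\infty$. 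Taking $v=v_i$ in the same relation gives $((v_i,v_j))=\widetilde{B}(v_i,v_j)=\mu_j^{-1}\delta_{ij}$, so the $v_j$ are pairwise orthogonal with respect to $((\cdot,\cdot))$. Finally, to see $\{v_j\}$ is a basis of $H^1_{\sigma,\tang}(\Omega)$, I would check completeness of this orthogonal system in the Hilbert space $(H^1_{\sigma,\tang}(\Omega),((\cdot,\cdot)))$: if $u\in H^1_{\sigma,\tang}(\Omega)$ satisfies $((u,v_j))=0$ for all $j$, then from $\widetilde{B}(u,v_j)=\mu_j^{-1}(u,v_j)$ we get $(u,v_j)=0$ for all $j$, whence $u=0$ because $\{v_j\}$ is an orthonormal basis of $L^2_{\sigma,\tang}(\Omega)$; thus $\overline{\spn\{v_j\}}=H^1_{\sigma,\tang}(\Omega)$, and since $((\cdot,\cdot))$ induces a norm equivalent to the $H^1$-norm this is exactly the asserted basis.

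I do not expect any genuinely hard step here: everything is routine once the coercivity \eqref{Buu} is in hand. The two points requiring care are (i) the compactness of the embedding on the constrained (divergence-free, tangent) spaces, which is handled by the componentwise Rellich theorem plus closedness of the subspace, and (ii) upgrading the $L^2$-orthonormal basis to a basis of $H^1_{\sigma,\tang}(\Omega)$, for which the identification of the $((\cdot,\cdot))$-Fourier coefficients of $u$ with its $L^2$-Fourier coefficients (via $\widetilde{B}(u,v_j)=\mu_j^{-1}(u,v_j)$) is the key observation. Of these, (i) is the main conceptual point, albeit a standard one.
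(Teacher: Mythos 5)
Your proposal is correct and follows essentially the same route as the paper: shift the form by $C_\beta$ to get coercivity, realize the shifted operator's inverse as a compact self-adjoint positive operator on $L^2_{\sigma,\tang}(\Omega)$ via the compact embedding $H^1_{\sigma,\tang}(\Omega)\hookrightarrow L^2_{\sigma,\tang}(\Omega)$, apply the spectral theorem, and undo the shift. You simply spell out the Lax--Milgram construction of the solution operator and the $H^1$-completeness argument that the paper delegates to citations of \cite{BoyerFabrie2013}; the only cosmetic point is that injectivity of $T$ should be deduced from the density of $H^1_{\sigma,\tang}(\Omega)$ in $L^2_{\sigma,\tang}(\Omega)$ before asserting strict positivity of $(Tf,f)$ for $f\neq 0$.
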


\begin{proof}
 Let us introduce $\widetilde{\mathbb{A}} := \mathbb{A} + \beta\mathbb{I}$, where $\beta$ is the constant from estimate \eqref{Buu}.
 Consider $\text{Dom}(\widetilde{\mathbb{A}}) := \{ u \in H^1_{\sigma,\tang}(\Omega)\,|\,\widetilde{\mathbb{A}}u \in L^2_{\sigma,\tang}(\Omega)\}$ and note that $\text{Dom}(\widetilde{\mathbb{A}})$ is compactly imbedded in $L^2_{\sigma,\tang}(\Omega)$. Furthermore, it is easy to see that $\text{Dom}(\widetilde{\mathbb{A}})$ is dense in $L^2_{\sigma,\tang}(\Omega)$ with respect to the $L^2$-norm.

 The coerciveness of $\widetilde{B}$, together with the continuity and symmetry of $B$ and the compactness mentioned above, imply that the operator $\widetilde{\mathbb{A}}$ has an inverse which is a self-adjoint compact operator from $L^2_{\sigma,\tang}(\Omega)$ to $\text{Dom}(\widetilde{\mathbb{A}}) \subset H^1_{\sigma,\tang}(\Omega)$. Therefore it follows from the spectral theory for compact self-adjoint operators that there exists an orthonormal basis of $L^2_{\sigma,\tang}(\Omega)$ of eigenvectors of $\widetilde{\mathbb{A}}$, along with an increasing sequence of eigenvalues tending to $+\infty$, see \cite[Section 6, Chapter II]{BoyerFabrie2013}. This basis is also orthogonal with respect to the $((\cdot,\cdot))$ inner product on $H^1_{\sigma,\tang}(\Omega)$, and it is a complete set, \cite[Corollary II.3.8 and Theorem IV.5.5]{BoyerFabrie2013}. Clearly, an eigenvector $v_j$ of $\widetilde{\mathbb{A}}$ is also an eigenvector of $\mathbb{A}$, albeit with eigenvalue $\lambda_j - \beta$.
 \end{proof}

\section{Existence of weak solution}\label{sec4}

In this section we establish global existence of a weak solution to \eqref{NSwNBC} with initial data in $L^2_{\sigma,\tang}$, using a Galerkin approximation, and we derive two formulations of the energy inequality.

\begin{theorem} \label{ExistWeakSolWithDu}
Let $\alpha \in C^0(\partial \Omega)$ and $u_0 \in L^2_{\sigma,\tang}(\Omega)$ be given. Then there exists $u \in C_{\loc}([0,+\infty);w-L^2_{\sigma,\tang}(\Omega)) \cap L^2_{\loc}((0,+\infty);H^1_{\sigma,\tang}(\Omega))$, such that, $\partial_t u \in L^{4/3}_{\loc}((0,+\infty);(H^1_{\sigma,\tang}(\Omega))^\prime)$ and, for any
test vector field $\Phi \in C^\infty_c([0,+\infty);C^\infty_{\sigma,\tang}(\overline{\Omega}))$, the identity \eqref{WeakFormWithDuIdentity} holds true.
\end{theorem}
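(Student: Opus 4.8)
The plan is to construct the weak solution via a Galerkin approximation using the eigenbasis $\{v_j\}$ of the Stokes operator $\mathbb{A}$ produced in Proposition~\ref{eigenfunctionsStokes}. Concretely, for each $m$ let $P_m$ be the $L^2_{\sigma,\tang}$-orthogonal projection onto $\spn\{v_1,\dots,v_m\}$ and seek $u_m(t) = \sum_{j=1}^m g_{j}^m(t) v_j$ solving the finite-dimensional system obtained by testing \eqref{NSwNBC} against each $v_j$: that is, $\frac{d}{dt}(u_m,v_j) + ((u_m\cdot\nabla)u_m, v_j) = -\nu B(u_m,v_j)$ with $u_m(0) = P_m u_0$, where $B$ is the bounded symmetric bilinear form from Lemma~\ref{stokeslemma} incorporating the boundary terms. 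Since $B$ and the trilinear term are smooth (quadratic) in the coefficients, the Cauchy–Lipschitz theorem gives a local-in-time solution; the energy estimate below promotes it to a global one.

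The first substantive step is the energy inequality. Testing the Galerkin system against $u_m$ itself, the nonlinear term $((u_m\cdot\nabla)u_m,u_m)$ vanishes because $u_m$ is divergence-free and tangent to $\partial\Omega$ (integration by parts gives $\frac12\int_\Omega u_m\cdot\nabla|u_m|^2 = -\frac12\int_\Omega (\dv u_m)|u_m|^2 + \frac12\int_{\partial\Omega}(u_m\cdot\n)|u_m|^2 = 0$). Hence $\frac12\frac{d}{dt}\|u_m\|_{L^2}^2 + \nu B(u_m,u_m) = 0$, and using the coercivity estimate \eqref{Buu}, namely $B(u_m,u_m) \ge c\|u_m\|_{H^1}^2 - C_\beta\|u_m\|_{L^2}^2$, followed by Gr\"onwall's inequality on the bounded interval $[0,T]$, I get a bound for $u_m$ in $L^\infty(0,T;L^2_{\sigma,\tang})$ and in $L^2(0,T;H^1_{\sigma,\tang})$ that is uniform in $m$ (using $\|P_m u_0\|_{L^2}\le\|u_0\|_{L^2}$). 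To pass to the limit in the nonlinear term via the Aubin–Lions–Simon lemma I also need a uniform bound on $\partial_t u_m$: I estimate $\langle\partial_t u_m,\Phi\rangle$ for $\Phi\in H^1_{\sigma,\tang}$ by projecting, using that the convective term $((u_m\cdot\nabla)u_m, P_m\Phi)$ is controlled in 3D by $\|u_m\|_{L^2}^{1/2}\|u_m\|_{H^1}^{3/2}\|\Phi\|_{H^1}$ (Ladyzhenskaya/Gagliardo–Nirenberg), which is bounded in $L^{4/3}(0,T)$ precisely because $u_m$ is bounded in $L^\infty_t L^2_x\cap L^2_t H^1_x$; the viscous term $\nu B(u_m,\Phi)$ is bounded in $L^2(0,T)$. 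This yields $\partial_t u_m$ uniformly bounded in $L^{4/3}(0,T;(H^1_{\sigma,\tang})')$, which is the exponent appearing in the statement and explains its origin.

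Next I extract (via a diagonal argument over $T\to\infty$) a subsequence with $u_m \rightharpoonup u$ weakly-$*$ in $L^\infty_\loc L^2$, weakly in $L^2_\loc H^1$, $\partial_t u_m\rightharpoonup \partial_t u$ weakly in $L^{4/3}_\loc((H^1_{\sigma,\tang})')$, and — by Aubin–Lions — $u_m\to u$ strongly in $L^2_\loc(L^2)$. The strong $L^2$ convergence is exactly what is needed to pass to the limit in the quadratic term $\int_0^\infty\int_\Omega [(u_m\cdot\nabla)\Phi]\cdot u_m$: write it as $-\int_0^\infty\int_\Omega (u_m\otimes u_m):D\Phi$ and use $u_m\otimes u_m\to u\otimes u$ in $L^1_\loc$. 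The linear terms (time derivative against $\partial_t\Phi$, the viscous bulk term $\int D\Phi:Du_m$, and the boundary term $\int_{\partial\Omega}\Phi\cdot[\alpha u_m - d\n(u_m)]$, the latter using continuity of the trace map $H^1(\Omega)\to L^2(\partial\Omega)$ applied to $u_m\rightharpoonup u$ weakly in $H^1$, hence strongly in $L^2(\partial\Omega)$ by compactness of the trace) pass to the limit by linearity and weak convergence, and the initial-data term is handled by $P_m u_0\to u_0$ in $L^2$ together with the fact that each $\Phi(0,\cdot)\in C^\infty_{\sigma,\tang}(\overline\Omega)$ is a limit of its Galerkin truncations. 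Taking $m\to\infty$ in the Galerkin identity (after multiplying the $j$-th equation by a scalar test function in time and summing, then using density of such combinations in the space of test fields $\Phi$) gives \eqref{WeakFormWithDuIdentity}. Finally, the regularity class: $u\in L^\infty_\loc L^2\cap L^2_\loc H^1$ with $\partial_t u\in L^{4/3}_\loc((H^1_{\sigma,\tang})')$ implies, by a standard interpolation/Lions–Magenes argument, that $u$ agrees a.e. with a function in $C_\loc([0,\infty);w\text{-}L^2_{\sigma,\tang})$, and the weak continuity at $t=0$ together with the initial term in the identity identifies $u(0)=u_0$.

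The main obstacle is not any single estimate but the bookkeeping around the \emph{boundary term and the Galerkin basis}: one must be sure the eigenbasis $\{v_j\}$ — which satisfies the Navier condition in the weak sense built into $\mathbb{A}$ — makes the finite-dimensional projections of the boundary integral $\int_{\partial\Omega}\Phi\cdot[\alpha u - d\n(u)]\dd S$ close consistently, i.e. that testing against $v_j$ in the Galerkin ODE reproduces $\nu B(u_m,v_j)$ with the correct boundary contribution, and that weak $H^1$ convergence (plus compact trace) really does pass the boundary term to the limit. The dimension-three character of the convective estimate is what forces the $4/3$ time-exponent rather than the $2$ one gets in 2D, so this is where the argument genuinely differs from \cite{CMR} and must be done with care; everything else is a faithful adaptation of the classical Leray construction.
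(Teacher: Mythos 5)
Your proposal is correct and follows essentially the same route as the paper: Galerkin approximation on the eigenbasis of the Stokes operator from Proposition~\ref{eigenfunctionsStokes}, the energy estimate via coercivity/trace/\Poincare, the 3D convective estimate giving the $L^{4/3}$ bound on $\partial_t u_m$, and Aubin--Lions compactness to pass to the limit, including the boundary term via the trace. The one step you leave implicit --- that $\|P_m\Phi\|_{H^1}\le C\|\Phi\|_{H^1}$ uniformly in $m$, which is needed when you replace $\|P_m\Phi\|_{H^1}$ by $\|\Phi\|_{H^1}$ in the $\partial_t u_m$ estimate and is \emph{not} automatic for an arbitrary $L^2$-orthonormal basis --- is exactly what the simultaneous orthogonality of the eigenbasis in $L^2$ and in the inner product $((\cdot,\cdot))$ provides; this is the paper's estimate \eqref{Pythagoras}, and it is the reason the Stokes eigenbasis (rather than any smooth basis) must be used.
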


\begin{remark} \label{existencewksolDu}
Theorem~\ref{ExistWeakSolWithDu} is equivalent to the existence of a weak solution in the sense of Definition~\ref{WeakFormWithDu}.
\end{remark}

\begin{proof}
 The proof is a slight adaptation of the well-known proof of existence of weak Leray solutions. We outline the main steps and highlight only the differences with respect to the standard argument.

 The first step is to consider the orthonormal basis of $L^2_{\sigma,\tang}(\Omega)$ given by the eigenfunctions of $\mathbb{A}$, $\{v_j\}_j \subset H^1_{\sigma,\tang}(\Omega)$, obtained in Proposition~\ref{eigenfunctionsStokes}. We use these vector fields to build finite dimensional approximations $u^M$ through the Galerkin method, by projecting the (weak form of the) PDE onto the space generated by the first $M$ vector fields in the basis, namely $X^M := $ span$\{ v_1, \ldots, v_M \}$. The approximations $u^M$ are given by $u^M = \sum_{j = 1}^M g_{j}^M v_j$, with coefficients $g_j^M = g_j^M(t)$ which are solutions of a system of quadratic ODEs with constant coefficients. Short-time existence is an easy consequence of Picard's theorem. The resulting $u^M$ satisfy
 \begin{align} \label{GalerkinWithDu}
 \int_{\Omega} &\left\{ (\partial_t u^M) \cdot v_j + [(u^M \cdot \nabla) u^M]\cdot v_j \right\}\dd x \nonumber \\
 & = -\nu \left(\int_\Omega Du^M : D v_j \dd x + \int_{\partial\Omega} [\alpha u^M -d\n(u^M)]\cdot v_j \dd S \right).
 \end{align}
 Standard energy estimates for \eqref{GalerkinWithDu} give:
 \begin{equation*}
 \frac{d}{dt}\|u^M\|_{L^2(\Omega)}^2 \leq -2\nu \|Du^M\|_{L^2(\Omega)}^2 + 2\nu\left(\|\lambda\|_{L^\infty(\partial\Omega)} + \|\alpha\|_{L^\infty(\partial\Omega)}\right)\|u^M\|_{L^2(\partial\Omega)}^2,
 \end{equation*}
 where $\lambda$ was introduced in \eqref{lambdaofp}. We use the trace inequality \eqref{traceineq} for $u^M$, together with the Poincar\'e inequality \eqref{meanfreePoinc}, since the mean of $u^M$ is zero, so that $\|u^M\|_{H^1(\Omega)} \lesssim \|Du^M\|_{L^2(\Omega)}$, followed by Young's inequality, to deduce that
 \begin{equation}\label{EnEstimateForUM2}
 \frac{d}{dt}\|u^M\|_{L^2(\Omega)}^2 + \nu \|Du^M\|_{L^2(\Omega)}^2 \leq C \nu\left(\|\lambda\|_{L^\infty(\partial\Omega)} + \|\alpha\|_{L^\infty(\partial\Omega)}\right)\|u^M\|_{L^2(\Omega)}^2.
 \end{equation}
 The Gr\"onwall lemma yields $\{u^M\}$ bounded in $L^\infty_\loc((0,+\infty);L^2_{\sigma,\tang}(\Omega))\cap L^2_\loc((0,+\infty);H^1_{\sigma,\tang}(\Omega))$, from which we deduce global existence of $u^M$ for fixed $M$. Furthermore, the bounds on $u^M$ are uniform with respect to $M$.

 In order to obtain, from these bounds, the boundedness of $\{\partial_t u^M\}$ in $L^{4/3}_\loc((0,+\infty);(H^1_{\sigma,\tang}(\Omega))^\prime)$, uniformly in $M$, we need to use the orthogonality of $\{v_j\}$ with respect to the inner product $((\cdot,\cdot))$ introduced in \eqref{tilBinnprod}. Let $\mathbb{P}^M$ denote the $L^2$-orthogonal projection onto $X^M$ and note that, for $t>0$ fixed, both $\partial_t u^M$ and $\mathbb{A}u^M$ belong to $X^M$, since $ \mathbb{A} v_j=\lambda_j v_j\in X^M$. Using the notation from Section~\ref{StokesOpSection} we note that the right-hand-side of \eqref{GalerkinWithDu} is a weak formulation of the right-hand-side of:
	\begin{align} \label{strongGalerkin}
		\begin{cases}
			 \, \partial_t u^M + \mathbb{P}^M [(u^M \cdot \nabla)u^M]=
- \nu \mathbb{A} u^M & \text{ in } \Omega,\\
\, \dv u^M = 0 & \text{ in } \Omega, \\
			\, u^M\cdot \n = 0, \quad
[(2Su^M)\n + \alpha u^M]_{\tang} = 0 & \text{ on } \partial \Omega.
		\end{cases}
	\end{align}

Let $W \in H^1_{\sigma,\tang}(\Omega)$. Then $W = \mathbb{P}^M W + Y$, where $Y$ is orthogonal to $\mathbb{P}^M W$ with respect to {\em both} the $L^2$ inner product and the $((\cdot,\cdot))$ inner product. In particular, using the $((\cdot,\cdot))$ inner product and its induced, $H^1$-equivalent, norm, it holds that
\begin{equation}\label{Pythagoras}
 \|\mathbb{P}^M W\|_{H^1} \leq C\|W\|_{H^1}.
\end{equation}
Then, using the definition of $\mathbb{A}$ by duality, \eqref{StokesOp}, we have
\begin{align*}
 |\langle \partial_t u^M , W \rangle| & = |\langle \partial_t u^M , \mathbb{P}^M W \rangle| \\
 & \leq \left| \int_\Omega [(u^M \cdot \nabla)u^M] \cdot \mathbb{P}^M W \dd x \right| \\
 & + \nu \left| \int_\Omega Du^M : D \mathbb{P}^M W \dd x \right| \\
 & +\nu \left| \int_{\partial\Omega} [\alpha u^M -d\n(u^M)]\cdot \mathbb{P}^M W \dd S \right|.
\end{align*}

Each of the three integrals above may be estimated in a standard way, using H\"{o}lder's inequality, interpolation, and the Sobolev imbedding $H^1 \subset L^6$ for the first term, Cauchy-Schwartz for the second, and the trace theorem for the boundary term, to find:
\begin{equation*}
 |\langle \partial_t u^M , W \rangle| \leq C\left(\|u^M\|_{L^2}^{1/2}\|u^M\|_{H^1}^{3/2} +\|u^M\|_{H^1}\right)\|\mathbb{P}^M W\|_{H^1}.
\end{equation*}
It then follows from \eqref{Pythagoras} that
\begin{equation*}
\|\partial_t u^M \|_{(H^1_{\sigma,\tang})^\prime} \leq C\left(\|u^M\|_{L^2}^{1/2}\|u^M\|_{H^1}^{3/2} +\|u^M\|_{H^1}\right).
\end{equation*}
Now, recall that we showed, from the energy estimate \eqref{EnEstimateForUM2}, that $u^M$ is bounded in $L^\infty_{\loc}((0,+\infty);L^2_{\sigma,\tang}(\Omega))\cap L^2_{\loc}((0,+\infty);H^1_{\sigma,\tang}(\Omega))$. In view of this we obtain, immediately, the desired bound: $\partial_t u^M$ is bounded in $L^{4/3}_{\loc}((0,+\infty);(H^1_{\sigma,\tang}(\Omega))^\prime)$, uniformly with respect to $M$.

 To conclude the proof of existence, a standard compactness argument allows us to pass to the limit $M\to +\infty$. Indeed, we may choose a subsequence, not relabeled, such that $u^M$ converges weakly-$\ast$ in $L^\infty_{\loc}((0,+\infty);L^2_{\sigma,\tang}(\Omega))$, weakly in $L^2_{\loc}((0,+\infty);H^1_{\sigma,\tang}(\Omega))$, strongly in $L^2_{\loc}((0,+\infty);L^2_{\sigma,\tang}(\Omega))$ to a limit $u$. In particular this implies, by the trace inequality \eqref{traceineq}, that the trace of $u^M$ on $\partial\Omega$ converges strongly in $L^2_{\loc}((0,+\infty);L^2_{\sigma,\tang}(\partial\Omega))$ to the corresponding trace of $u$. Furthermore, the Aubin-Lions compactness theorem implies $u \in C_{\loc}([0,+\infty);w-L^2_{\sigma,\tang}(\Omega))$.

 Using $\Phi \in C^\infty_c([0,+\infty);C^\infty_{\sigma,\tang}(\overline{\Omega}))$ as test vector field in a weak formulation of \eqref{strongGalerkin}, we can pass to the limit $M\to \infty$ and obtain the weak formulation \eqref{WeakFormWithDuIdentity}. This concludes the proof.
 \end{proof}

Our next result concerns a different way of writing the term on the right hand side of \eqref{WeakFormWithDuIdentity}, under the time integral. This will be useful to write \eqref{WeakFormWithDuIdentity} in a different way, which will lead to two sets of energy inequalities.

\begin{prop} \label{EquivWeakFormStokesOp}
Let $u \in H^1_{\sigma,\tang}(\Omega)$. Then, for every $\Phi\in C^\infty_{\sigma,\tang}(\overline{\Omega})$ it holds that
\begin{align} \label{WeakFormStokesOpId}
\int_\Omega D\Phi : Du \dd x + \int_{\partial\Omega} \Phi \cdot [\alpha u - & d\n (u)] \dd S \nonumber \\
= & 2\int_\Omega S\Phi : Su \dd x + \int_{\partial\Omega} \alpha\Phi \cdot u \dd S.
\end{align}
\end{prop}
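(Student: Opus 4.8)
The plan is to establish the identity pointwise in the integrands as far as possible, reducing everything to an algebraic fact about symmetric and antisymmetric parts of the Jacobians plus one integration by parts handling the antisymmetric (rotational) contribution, which is where the shape operator term $d\n(u)$ gets absorbed. First I would write $D\Phi = S\Phi + A\Phi$ and $Du = Su + Au$, where $A\Phi = (D\Phi - (D\Phi)^t)/2$ and similarly for $u$, are the antisymmetric parts. Since the trace of a product of a symmetric and an antisymmetric matrix vanishes, $D\Phi : Du = S\Phi : Su + A\Phi : Au$. Thus the volume term on the left of \eqref{WeakFormStokesOpId} equals $\int_\Omega S\Phi : Su + \int_\Omega A\Phi : Au$, and we must show
\[
\int_\Omega A\Phi : Au \dd x - \int_{\partial\Omega} \Phi\cdot d\n(u)\dd S = \int_\Omega S\Phi : Su\dd x,
\]
equivalently $\int_\Omega (A\Phi : Au - S\Phi : Su)\dd x = \int_{\partial\Omega}\Phi\cdot d\n(u)\dd S$. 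Note $A\Phi:Au - S\Phi:Su = -(D\Phi)^t : Du$ up to bookkeeping: indeed $S\Phi:Su + A\Phi:Au = D\Phi:Du$ and $S\Phi:Su - A\Phi:Au = D\Phi:(Du)^t = (D\Phi)^t:Du$, so $A\Phi:Au - S\Phi:Su = D\Phi:Du - 2S\Phi:Su = (D\Phi):(Du) - (D\Phi):(Du)^t$, hmm; cleaner: $2(A\Phi:Au) = D\Phi:Du - (D\Phi)^t:Du$ and $2(S\Phi:Su) = D\Phi:Du + (D\Phi)^t:Du$, so $A\Phi:Au - S\Phi:Su = -(D\Phi)^t:Du = -\partial_i\Phi_j\,\partial_j u_i$.

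So the claim reduces to the integration-by-parts identity
\[
-\int_\Omega \partial_i\Phi_j\,\partial_j u_i\dd x = \int_{\partial\Omega}\Phi\cdot d\n(u)\dd S,
\]
which I would prove by moving a derivative: $-\int_\Omega \partial_i\Phi_j\,\partial_j u_i = \int_\Omega \Phi_j\,\partial_i\partial_j u_i - \int_{\partial\Omega}\Phi_j\,\partial_j u_i\, n_i \dd S$. The volume term vanishes because $\partial_i\partial_j u_i = \partial_j(\dv u) = 0$. The boundary term is $-\int_{\partial\Omega}\Phi_j (\partial_j u_i) n_i\dd S = -\int_{\partial\Omega}\Phi\cdot((Du)^t\n)\dd S$, wait: $(\partial_j u_i)n_i = ((Du)\n)$ has $j$-component $\sum_i \partial_j u_i \, n_i$, which is $\big((Du)^t\n\big)_j$; so the boundary term equals $-\int_{\partial\Omega}\Phi\cdot((Du)^t\n)\dd S$. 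Now since $\Phi$ is tangent to $\partial\Omega$, only the tangential part of $(Du)^t\n$ matters, and by \eqref{Dutn} from the proof of Lemma~\ref{bdrytermNBC} we have $((Du)^t\n)\cdot\tau = -d\n(u)\cdot\tau$ for every tangent vector $\tau$; applying this with $\tau$ ranging over a tangent frame (or simply noting $[(Du)^t\n]_{\tang} = -[d\n(u)]_{\tang}$ and $d\n(u)$ is already tangent since $d\n_p$ maps into $T_p(\partial\Omega)$), the boundary term becomes $+\int_{\partial\Omega}\Phi\cdot d\n(u)\dd S$, exactly as needed. Finally, adding $\int_{\partial\Omega}\alpha\Phi\cdot u\dd S$ to both sides recovers \eqref{WeakFormStokesOpId}.

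The only genuinely delicate point is the interchange of a derivative in the integration by parts, which requires $u \in H^2$ a priori; for $u \in H^1_{\sigma,\tang}(\Omega)$ one should either invoke a density argument (approximate $u$ by smooth divergence-free tangent fields, noting both sides of \eqref{WeakFormStokesOpId} are continuous in the $H^1$ topology by the trace inequality \eqref{traceineq} and Lemma~\ref{stokeslemma}) or simply state the pointwise computation for smooth $u$ and pass to the limit. I expect this density/regularity bookkeeping to be the main obstacle, everything else being the linear-algebra identity above plus the already-established \eqref{Dutn}. I would present the smooth case in full and then invoke density, citing the continuity of the trace $H^1(\Omega)\to L^2(\partial\Omega)$ and the boundedness established in Lemma~\ref{stokeslemma}.
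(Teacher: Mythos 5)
Your proposal is correct and follows essentially the same route as the paper: the algebraic splitting of $D\Phi : Du$ into $2\,S\Phi : Su$ plus a transpose term, one integration by parts that kills the volume contribution via $\dv u = 0$, the identity \eqref{Dutn} together with the tangency of $\Phi$ to convert $(Du)^t\n$ into $-d\n(u)$ on the boundary, and a concluding density argument. The symmetric/antisymmetric bookkeeping is just a repackaging of the paper's computation, and your remark that the smooth case plus $H^1$-continuity of both sides handles general $u$ is exactly the paper's ``conclude by density.''
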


\begin{proof}
 First assume that $u$ is smooth, divergence free and tangent to $\partial \Omega$. Let $\Phi \in C^\infty_{\sigma,\tang}(\overline{\Omega})$. Recall expression \eqref{Dutn} deduced in the proof of Lemma~\ref{bdrytermNBC}. The same argument used there yields
 \[[(Du)^t \,\n]_\tang = - [d\n(u)]_\tang.\]

 We compute
 \begin{align*}
 \int_\Omega D\Phi : Du \dd x = & \int_\Omega D\Phi : 2 Su \dd x - \int_\Omega D\Phi : (Du)^t \dd x \\
 = & 2 \int_\Omega S\Phi : Su \dd x - \int_\Omega D\Phi : (Du)^t \dd x.
 \end{align*}

 Next we observe that, since $u$ is divergence free,
 \[ - \int_\Omega D\Phi : (Du)^t \dd x
 = - \int_{\partial \Omega} \Phi \cdot [(Du)^t\n] \dd S = \int_{\partial\Omega} \Phi \cdot [d\n(u)] \dd S.\]
 This establishes \eqref{WeakFormStokesOpId} for smooth $u$. We conclude the proof by density.
 \end{proof}

In view of Proposition~\ref{EquivWeakFormStokesOp} we have obtained an alternative weak formulation, equivalent to \eqref{WeakFormWithDuIdentity}, namely,
\begin{align} \label{WeakFormWithSuIdentity}
\int_0^{+\infty} \int_\Omega & \left\{\partial_t\Phi\cdot u\, + \, [(u \cdot \nabla )\Phi]\cdot u \right\}\dd x \dd t + \int_\Omega \Phi(0,x)\cdot u_0(x) \dd x \nonumber \\ & = \nu\int_0^{+\infty}\left( \int_\Omega 2S\Phi : Su \dd x +
\int_{\partial\Omega} \alpha \Phi \cdot u \dd S\right)\dd t.
\end{align}

We will now amend our existence theorem to include this new weak formulation and we note that two different energy inequalities arise.

\begin{theorem} \label{GlobalExistWeakSol}
Let $\alpha \in C^0(\partial \Omega)$ and $u_0 \in L^2_{\sigma,\tang}(\Omega)$ be given. Then there exists a weak solution $u$, according to Definition~\ref{WeakFormWithDu}, such that, for any test vector field $\Phi \in C^\infty_c([0,+\infty);C^\infty_{\sigma,\tang}(\overline{\Omega}))$, \eqref{WeakFormWithSuIdentity} holds, in addition to identity \eqref{WeakFormWithDuIdentity} holding true.

Moreover, for almost every $s \geq 0$ and for every $t \geq s$, $u$ satisfies both of the following energy inequalities:
\begin{align}
\label{EnInDu}
\|u(t ) \|_{L^2}^2 + & 2\nu \int_{s}^{t } \|Du\|_{L^2}^2 \dd \tau \nonumber \\
& \leq \|u(s)\|_{L^2}^2 + 2\nu \int_{s}^{t } \int_{\partial\Omega} \left(\|\lambda\|_{L^\infty(\partial\Omega)} - \alpha\right) |u|^2 \dd S \dd \tau ,
\\
\text{and } \nonumber
\\
\label{EnInSu}
\|u(t ) \|_{L^2}^2 + & 4\nu \int_{s}^{t } \|Su\|_{L^2}^2 \dd \tau \leq \|u(s)\|_{L^2}^2 - 2\nu \int_{s}^{t } \int_{\partial\Omega} \alpha |u|^2 \dd S \dd \tau.
\end{align}
Finally, both \eqref{EnInDu} and \eqref{EnInSu} are satisfied for $s=0$.
\end{theorem}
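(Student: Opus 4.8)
The plan is to revisit the Galerkin construction of Theorem~\ref{ExistWeakSolWithDu} and extract the two energy inequalities by passing to the limit in the energy identities for the approximations $u^M$. The first task is to establish the new weak formulation \eqref{WeakFormWithSuIdentity}: since Proposition~\ref{EquivWeakFormStokesOp} shows that, for fixed $t$, the spatial right-hand side of \eqref{WeakFormWithDuIdentity} equals that of \eqref{WeakFormWithSuIdentity} whenever $u(t,\cdot) \in H^1_{\sigma,\tang}(\Omega)$, and since the solution $u$ produced by Theorem~\ref{ExistWeakSolWithDu} lies in $L^2_{\loc}((0,+\infty);H^1_{\sigma,\tang}(\Omega))$, we may integrate that pointwise identity in time to conclude that \eqref{WeakFormWithSuIdentity} holds for the same solution $u$ and the same class of test fields. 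This is immediate and requires no new estimates.

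Next I would derive the two energy identities at the Galerkin level. Taking the $M$-dimensional approximation $u^M = \sum_{j=1}^M g_j^M v_j$ and testing \eqref{GalerkinWithDu} against $u^M$ itself (i.e. multiplying the $j$-th equation by $g_j^M$ and summing over $j$), the nonlinear term $\int_\Omega [(u^M\cdot\nabla)u^M]\cdot u^M \dd x$ vanishes because $u^M$ is divergence-free and tangent to $\partial\Omega$, leaving
\[
\frac{1}{2}\frac{d}{dt}\|u^M\|_{L^2}^2 = -\nu\left(\int_\Omega Du^M:Du^M \dd x + \int_{\partial\Omega}[\alpha u^M - d\n(u^M)]\cdot u^M \dd S\right).
\]
Using Proposition~\ref{EquivWeakFormStokesOp} with $\Phi = u^M$ rewrites the right-hand side as $-\nu\big(2\|Su^M\|_{L^2}^2 + \int_{\partial\Omega}\alpha|u^M|^2 \dd S\big)$; integrating in time from $s$ to $t$ gives the exact $Su$-identity for $u^M$. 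For the $Du$-form, I bound $-\int_{\partial\Omega} d\n(u^M)\cdot u^M \dd S \le \|\lambda\|_{L^\infty(\partial\Omega)}\|u^M\|_{L^2(\partial\Omega)}^2$ using that the eigenvalues of $-d\n_p$ are bounded by $\lambda(p)$, which yields the integrated inequality
\[
\|u^M(t)\|_{L^2}^2 + 2\nu\int_s^t\|Du^M\|_{L^2}^2\dd\tau \le \|u^M(s)\|_{L^2}^2 + 2\nu\int_s^t\int_{\partial\Omega}(\|\lambda\|_{L^\infty(\partial\Omega)}-\alpha)|u^M|^2\dd S\dd\tau.
\]

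Finally I would pass to the limit $M\to\infty$. Along the subsequence from the proof of Theorem~\ref{ExistWeakSolWithDu}, $u^M \to u$ strongly in $L^2_{\loc}((0,+\infty);L^2)$ and weakly in $L^2_{\loc}((0,+\infty);H^1)$, and the traces converge strongly in $L^2_{\loc}((0,+\infty);L^2(\partial\Omega))$. Weak lower semicontinuity of the $L^2$-norm handles $\|u(t)\|_{L^2}^2$ (for a.e.\ $t$, and for $t=0$ since $u^M(0)\to u_0$ strongly, which gives equality at $s=0$) and the dissipation terms $\int_s^t\|Du\|_{L^2}^2$, $\int_s^t\|Su\|_{L^2}^2$; strong trace convergence handles the boundary integrals $\int_s^t\int_{\partial\Omega}\alpha|u|^2$ and $\int_s^t\int_{\partial\Omega}\|\lambda\|_{L^\infty}|u|^2$ exactly in the limit. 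The one genuinely delicate point — and the place I expect to spend the most care — is the validity of the inequalities for \emph{almost every} $s$ rather than every $s$: the function $\tau\mapsto\|u(\tau)\|_{L^2}^2$ need only be defined after modification on a null set, so one argues that for a.e.\ $s$ one has both $u^M(s)\to u(s)$ in $L^2$ along the subsequence (a consequence of strong $L^2_{\loc}$-in-time convergence) and that $s$ is a Lebesgue point, which then lets $\|u^M(s)\|_{L^2}^2 \to \|u(s)\|_{L^2}^2$; combining with the lower-semicontinuity bounds on the left gives the stated inequalities for a.e.\ $s$ and every $t\ge s$. The case $s=0$ is separate and easier because $u^M(0)=\mathbb{P}^M u_0\to u_0$ strongly, so no exceptional set is needed there.
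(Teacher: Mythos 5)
Your proposal is correct and follows essentially the same route as the paper: derive the two energy identities at the Galerkin level (exact for the $Su$-form via Proposition~\ref{EquivWeakFormStokesOp} with $\Phi=u^M$, and with the shape operator bounded by $\|\lambda\|_{L^\infty(\partial\Omega)}$ for the $Du$-form), integrate from $s$ to $t$, and pass to the limit using weak $H^1$ convergence for the dissipation terms, strong $L^2_{\loc}$-in-time convergence for the evaluation at $s$, and strong trace convergence for the boundary integrals. The one place where you diverge is the evaluation of $\|u(t)\|_{L^2}^2$ at a fixed time $t$: the paper smears the integrated inequality against $|\phi_\vare|^2$ with $\phi_\vare$ concentrating at $t$, passes to the limit in $M$ first, and then sends $\vare\to 0$ at Lebesgue points of $r\mapsto\|u(r)\|_{L^2}^2$; you instead invoke a.e.-in-time strong convergence of $u^M(t)$ in $L^2(\Omega)$ along the subsequence, which is a legitimate and somewhat more direct alternative. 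However, both devices only yield the inequality for almost every $t\ge s$, whereas the statement requires \emph{every} $t\ge s$; the paper closes this by appealing to $u\in C_{\lo}([0,+\infty);w\text{-}L^2_{\sigma,\tang}(\Omega))$ together with weak lower semicontinuity of the norm along a sequence of good times approaching $t$. You assert ``every $t\ge s$'' in your conclusion but justify only a.e.\ $t$, so you should add this last (one-line) step explicitly.
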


\begin{proof}
The existence part is an easy consequence of the existence result, Theorem~\ref{ExistWeakSolWithDu}, together with the equivalence between the two weak formulations \eqref{WeakFormWithDuIdentity} and \eqref{WeakFormWithSuIdentity}. All that remains to prove are the energy inequalities.

To show \eqref{EnInDu} we revisit \eqref{GalerkinWithDu}, multiplying by $g^M_j$ and summing in $j$, bounding only the shape operator, to find
 \begin{equation} \label{blaDu1}
 \frac{d}{dt}\|u^M\|_{L^2(\Omega)}^2 \leq -2\nu \|Du^M\|_{L^2(\Omega)}^2 + 2\nu\int_{\partial\Omega}\left( \|\lambda\|_{L^\infty(\partial\Omega)} - \alpha\right) |u^M|^2 \dd S.
 \end{equation}
Let $r \geq s$ and integrate in time from $s$ to $r$ to obtain
 \begin{align} \label{blaDu2}
 \|u^M(r, \cdot)&\|_{L^2(\Omega)}^2 \leq -2\nu \int_{s}^r \|Du^M(\tau,\cdot)\|_{L^2(\Omega)}^2 \dd \tau \nonumber \\
 & + 2\nu \int_{s}^r \int_{\partial\Omega}\left( \|\lambda\|_{L^\infty(\partial\Omega)} - \alpha\right) |u^M|^2 \dd S \dd \tau + \|u^M(s,\cdot)\|_{L^2(\Omega)}^2 .
 \end{align}
Fix $t \geq s$. As is done in the case of no-slip boundary conditions, see \cite[Proposition 14.1]{LMRbook}, choose $\phi \in C^\infty_c(\real_+)$, $\int_{\real_+} |\phi|^2 dr = 1$, $\supp \phi \subset (0,1)$. Let $\vare > 0$ and set
$\phi_\vare = \phi_\vare(r)\equiv \frac{1}{\sqrt{\vare}}\phi\left(\frac{r-t }{\vare}\right)$, $r \in \real_+$. We multiply \eqref{blaDu2} by $|\phi_\vare(r)|^2$, move the term with $Du^M$ to the left-hand-side, and observe that $\phi_\vare$ vanishes if $r \leq s$. We may therefore integrate on $\real_+$ to deduce that
 \begin{align} \label{blaDu3}
 & \int_0^{+\infty} |\phi_\vare|^2(r) \|u^M(r, \cdot)\|_{L^2(\Omega)}^2 \dd t + 2\nu \int_0^{+\infty} |\phi_\vare|^2(r) \int_{s}^r \|Du^M(\tau,\cdot)\|_{L^2(\Omega)}^2 \dd \tau \dd r\nonumber \\
 & \leq 2\nu \int_0^{+\infty} |\phi_\vare|^2(r) \int_{s}^r \int_{\partial\Omega}\left( \|\lambda\|_{L^\infty(\partial\Omega)} - \alpha\right) |u^M|^2 \dd S \dd \tau \dd r + \|u^M(s,\cdot)\|_{L^2(\Omega)}^2 .
 \end{align}
Recall, from the proof of Theorem~\ref{ExistWeakSolWithDu}, that, passing to subsequences as needed, we may assume $u^M$ converges to a weak solution $u$ weakly in $L^2_{\loc}((0,+\infty);H^1_{\sigma,\tang}(\Omega))$ and strongly in $L^2_{\loc}((0,+\infty);L^2_{\sigma,\tang}(\Omega))$ and
 the trace of $u^M$ on $\partial\Omega$ converges strongly in $L^2_{\loc}((0,+\infty);L^2_{\sigma,\tang}(\partial\Omega))$ to the corresponding trace of $u$. Using these properties of convergence and passing to the $\liminf_{M \to +\infty}$ in the inequality \eqref{blaDu3} yields
 \begin{align*}
 & \int_0^{+\infty} |\phi_\vare|^2(r) \|u (r, \cdot)\|_{L^2(\Omega)}^2 \dd r + 2\nu \int_0^{+\infty} |\phi_\vare|^2(r) \int_{s}^r \|Du (\tau,\cdot)\|_{L^2(\Omega)}^2 \dd \tau \dd r\nonumber \\
 & \leq 2\nu \int_0^{+\infty} |\phi_\vare|^2(r ) \int_{s}^r \int_{\partial\Omega}\left( \|\lambda\|_{L^\infty(\partial\Omega)} - \alpha\right) |u|^2 \dd S \dd \tau \dd r \nonumber \\
 & +
 \liminf_{M \to + \infty} \|u^M(s,\cdot)\|_{L^2(\Omega)}^2 .
 \end{align*}
 Assume now that $t $ is a Lebesgue point of $r \mapsto \|u(r ,\cdot)\|_{L^2}^2$. Then, passing to the limit $\vare \to 0$ we find
 \begin{align} \label{blaDu5}
 & \|u (t , \cdot)\|_{L^2(\Omega)}^2 + 2\nu \int_{s}^{t } \|Du (\tau,\cdot)\|_{L^2(\Omega)}^2 \dd \tau \nonumber \\
 & \leq 2\nu \int_{s}^{t } \int_{\partial\Omega}\left( \|\lambda\|_{L^\infty(\partial\Omega)} - \alpha\right) |u|^2 \dd S \dd \tau +
 \liminf_{M \to + \infty} \|u^M(s,\cdot)\|_{L^2(\Omega)}^2 .
 \end{align}
Recall that $u \in C_{\loc}([0,+\infty);w-L^2_{\sigma,\tang}(\Omega))$. This extra regularity enables us to deduce that \eqref{blaDu5} holds true for all $t \geq s$. Finally, since $u^M$ converges to $u$ strongly in $L^2_{\loc}((0,+\infty);L^2_{\sigma,\tang}(\Omega))$ it follows that, again passing to subsequences as needed, $u^M(s)$ converges to $u(s)$ strongly in $L^2_{\sigma,\tang}(\Omega)$ for almost every $s$. This concludes the proof of \eqref{EnInDu}.

Next, we note that, by virtue of Proposition~\ref{EquivWeakFormStokesOp}, we have that the sequence $u^M$ also satisfies
 \begin{align*}
 \int_{\Omega} &\left\{ (\partial_t u^M) \cdot v_j + [(u^M \cdot \nabla) u^M]\cdot v_j \right\}\dd x \\
 & = -\nu \left(\int_\Omega 2Su^M : S v_j \dd x + \int_{\partial\Omega} \alpha u^M \cdot v_j \dd S \right).
 \end{align*}
Once again we multiply this identity by $g^M_j$ and sum in $j$ to obtain
 \begin{equation} \label{blaSu1}
 \frac{d}{dt}\|u^M\|_{L^2(\Omega)}^2 \leq -4\nu \|Su^M\|_{L^2(\Omega)}^2 - 2\nu\int_{\partial\Omega} \alpha |u^M|^2 \dd S.
 \end{equation}
The differential inequality \eqref{blaSu1} is analogous to \eqref{blaDu1} and so we may proceed with the same steps as in the proof of \eqref{EnInDu} to arrive at \eqref{EnInSu}.

Finally, note that $s=0$ is a distinguished time since, by construction, $u^M(0)$ converges strongly to $u(0)=u_0$ when $M\to +\infty$.

This concludes the proof.

\end{proof}

\begin{definition} \label{Leray-Hopf weak solution}
Fix $\alpha \in C^0(\partial\Omega)$ and $u_0 \in L^2_{\sigma,\tang}(\Omega)$. Let $u \in C_{\lo}([0,+\infty);w-L^2_{\sigma,\tang}(\Omega)) \cap L^2_\lo((0,+\infty);H^1_{\sigma,\tang}(\Omega))$, with $\partial_t u \in L^{4/3}_\lo((0,+\infty);(H^1_{\sigma,\tang}(\Omega))^\prime)$, be a weak solution of \eqref{NSwNBC} with initial data $u_0$. We say $u$ is a {\em Leray-Hopf weak solution} if, additionally, for almost every $s \geq 0$, and for $s=0$, and every $t \geq s$, $u$ satisfies both energy inequalities \eqref{EnInDu} and \eqref{EnInSu}.
\end{definition}

\begin{remark}
Since $\partial_t u \in L^{4/3}_{\loc}((0,+\infty);(H^1_{\sigma,\tang}(\Omega))^\prime)$ it is standard to {\em extend} the definition of weak solution so as to allow for time-independent test vector fields $\Phi \in C^\infty_{\sigma,\tang}(\overline{\Omega})$. For such test vector fields the identity \eqref{WeakFormWithSuIdentity} should be substituted by
\begin{align} \label{WeakFormWithSuIdentityPhiTimeIndep}
\int_0^{t} \int_\Omega & [(u \cdot \nabla )\Phi]\cdot u \dd x \dd s + \int_\Omega \Phi(x)\cdot [u_0(x) - u(t,x)] \dd x \nonumber \\ & = \nu\int_0^{t}\left( \int_\Omega 2 S\Phi : Su \dd x + \int_{\partial\Omega} \alpha \Phi \cdot u \dd S\right)\dd s.
\end{align}
We will make use of this alternative formulation later in this work.
\end{remark}

\section{Symmetric Poincar\'e inequality}\label{sec5}

In Theorem~\ref{GlobalExistWeakSol} we obtained the existence of a weak solution to \eqref{NSwNBC} which satisfies two different energy inequalities. In the remainder of this article, we will be showing that, under appropriate hypotheses on the friction coefficient $\alpha$, we can prove exponential decay of the weak solution in energy norm. A crucial tool to prove such results are Poincar\'e-type inequalities. In this section we state and prove such an inequality in terms of the symmetric part of the Jacobian of $u$; this is related to Korn's inequality in $H^1_0$, see \cite[Remark IV.7.3, (IV.87)]{BoyerFabrie2013}. This result is already known, see \cite[Lemma 3.3]{AR2014}; we include a proof for the sake of completeness.

Recall the notation $\Ker S := \{u \in H^1_{\sigma,\tang} (\Omega) \, | \, S u = 0\}$ and $(\Ker S)^\perp := \{ u \in H^1_{\sigma,\tang}(\Omega) \, | \, \int_\Omega u \cdot v \dd x = 0 \text{ for all } v \in \Ker S\}.$

\begin{prop} \label{SymmetricPoincare}
There exists a constant $C=C(\Omega)>0$ such that, for all $u \in (\Ker S)^\perp$, it holds that
\begin{equation} \label{SymPoincIneq}
\|u\|_{L^2(\Omega)} \leq C \| Su\|_{L^2(\Omega)}.
\end{equation}
\end{prop}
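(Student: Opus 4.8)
The natural strategy is a compactness–contradiction argument, of the same flavor as the proof of the classical Korn and Poincaré inequalities. Suppose \eqref{SymPoincIneq} fails. Then for every $n \in \mathbb{N}$ there exists $u_n \in (\Ker S)^\perp$ with $\|u_n\|_{L^2(\Omega)} = 1$ but $\|S u_n\|_{L^2(\Omega)} < 1/n$. The first thing I would establish is that the sequence $\{u_n\}$ is in fact bounded in $H^1_{\sigma,\tang}(\Omega)$; this is the key analytic input and the part that genuinely uses the geometry, so I will say more on it below. Granting this, Rellich compactness gives a subsequence (not relabeled) converging weakly in $H^1_{\sigma,\tang}(\Omega)$ and strongly in $L^2_{\sigma,\tang}(\Omega)$ to some $u_\infty$. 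Then $\|u_\infty\|_{L^2(\Omega)} = 1$ by strong $L^2$ convergence, while $S u_\infty = 0$ by weak lower semicontinuity of the $L^2$ norm of $S u_n$ (which tends to $0$), so $u_\infty \in \Ker S$. On the other hand, $(\Ker S)^\perp$ is closed in $L^2$ (it is the $L^2$-orthogonal complement of a subspace), so $u_\infty \in (\Ker S)^\perp$ as well; hence $u_\infty \in \Ker S \cap (\Ker S)^\perp = \{0\}$, contradicting $\|u_\infty\|_{L^2} = 1$. For this last step I should also note that $\Ker S$ is finite-dimensional (it is spanned by the rigid rotations tangent to $\partial\Omega$, as discussed in Section~\ref{sec7}), so no subtlety arises in the orthogonality bookkeeping.

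The crux is therefore the $H^1$ bound on $\{u_n\}$, i.e. controlling the full gradient $\|Du_n\|_{L^2}$ by $\|Su_n\|_{L^2}$ plus lower-order terms. This is exactly a Korn-type inequality, but one must be careful because here $u_n$ is only tangent to $\partial\Omega$, not zero on the boundary, so the $H^1_0$-Korn inequality cited from \cite{BoyerFabrie2013} does not directly apply. The plan is to invoke the \emph{second Korn inequality} on the bounded domain $\Omega$ with smooth boundary: for all $u \in H^1(\Omega)$,
\[
\|Du\|_{L^2(\Omega)}^2 \leq C\bigl(\|Su\|_{L^2(\Omega)}^2 + \|u\|_{L^2(\Omega)}^2\bigr).
\]
Applying this to $u_n$ with $\|u_n\|_{L^2} = 1$ and $\|Su_n\|_{L^2} < 1/n$ immediately yields $\|Du_n\|_{L^2} \leq C$, hence $\{u_n\}$ is bounded in $H^1$ and the compactness argument goes through. (If one prefers to avoid quoting the second Korn inequality as a black box, one can alternatively run the contradiction argument at the level of $H^1$ directly, using the Nečas/Lions lemma that $\|u\|_{L^2} \lesssim \|u\|_{H^{-1}} + \|Du\|_{L^2}$ together with $\|Du\|_{H^{-1}} \lesssim \|Su\|_{L^2}$, but quoting second Korn is cleaner.)

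I expect the main obstacle to be purely expository: making sure the Korn inequality invoked is the correct one for vector fields that are merely tangent (not vanishing) on $\partial\Omega$, and confirming that $\Ker S$, the set of tangent rigid rotations, is closed and finite-dimensional so that the orthogonal decomposition $H^1_{\sigma,\tang} = \Ker S \oplus (\Ker S)^\perp$ behaves as expected in both the $L^2$ and $H^1$ topologies. No delicate estimate is needed beyond second Korn; the geometry of $\partial\Omega$ enters only through $\Ker S$ being possibly nontrivial (when $\Omega$ is a solid of revolution), and the inequality \eqref{SymPoincIneq} is stated precisely on the complement of that kernel, which is exactly what makes the contradiction argument close.
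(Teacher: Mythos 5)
Your argument is correct, and its core --- a compactness--contradiction argument that produces a limit in $\Ker S \cap (\Ker S)^\perp = \{0\}$ with unit $L^2$ norm --- is exactly the engine of the paper's Step~2. The difference lies in where the Korn-type control of $\|Du\|_{L^2}$ comes from, and in how the pieces are assembled. You quote the second Korn inequality $\|Du\|_{L^2}^2 \leq C(\|Su\|_{L^2}^2 + \|u\|_{L^2}^2)$ for general $H^1$ fields as a black box, and use it \emph{inside} the contradiction argument to get the $H^1$ bound on the normalized sequence; this is legitimate and makes the proof shorter. The paper instead \emph{proves} its own Korn-type estimate (Step~1) for the specific class of divergence-free fields tangent to $\partial\Omega$, by taking $\Phi = u$ in the identity of Proposition~\ref{EquivWeakFormStokesOp} to get
\begin{equation*}
\int_\Omega |Du|^2 \dd x = 2\int_\Omega |Su|^2 \dd x + \int_{\partial\Omega} u\cdot d\n(u)\dd S,
\end{equation*}
and then controlling the boundary term by the trace inequality; this yields $\|Du\|_{L^2} \leq 2\|Su\|_{L^2} + K\|\lambda\|_{L^\infty(\partial\Omega)}\|u\|_{L^2}$ with the constant's dependence on the principal curvatures made explicit, which matters for the paper's broader theme of tracking the influence of boundary geometry. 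The paper then runs the contradiction argument to obtain an intermediate inequality $\|u\|_{L^2} \leq C\|Su\|_{L^2} + \tfrac{1}{2K\|\lambda\|_{L^\infty}}\|Du\|_{L^2}$ with a coefficient tuned so that a final absorption step (Step~3) closes the estimate, rather than concluding directly within the contradiction as you do. Both routes are sound; yours buys brevity at the cost of an external citation for second Korn (which you should supply --- the paper's bibliography only covers the $H^1_0$ version), while the paper's buys self-containedness and an explicit geometric constant.
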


\begin{proof}
The proof proceeds in three steps.

\vspace{.3cm}

{\it Step 1.} We claim that there is a constant $K>0$ such that, for all $u \in H^1_{\sigma,\tang}(\Omega)$ it holds that
\[\| Du \|_{L^2(\Omega)} \leq 2 \| Su \|_{L^2(\Omega)} + K\|\lambda\|_{L^\infty(\partial\Omega)}
\| u \|_{L^2(\Omega)}.\]
To see this let $u \in H^1_{\sigma,\tang}(\Omega)$ and observe that, in \eqref{WeakFormStokesOpId}, through a density argument, we can use $\Phi = u$. This eventually yields
\[\int_\Omega |Du|^2 \dd x = 2 \int_\Omega |Su|^2 \dd x + \int_{\partial\Omega} u \cdot d\n(u) \dd S.
\]
Therefore, using the trace inequality, followed by Young's inequality, we obtain
\begin{align*}
\|Du\|_{L^2(\Omega)}^2 & \leq 2 \|Su\|_{L^2(\Omega)}^2 + \|\lambda\|_{L^\infty(\partial\Omega)} \|u\|_{L^2(\partial\Omega)}^2 \\
& \leq 2 \|Su\|_{L^2(\Omega)}^2 + K\|\lambda\|_{L^\infty(\partial\Omega)} \|u\|_{L^2(\Omega)} \|Du\|_{L^2(\Omega)} \\
& \leq 2 \|Su\|_{L^2(\Omega)}^2 + \frac{\left(K\|\lambda\|_{L^\infty(\partial\Omega)} \right)^2}{2}\|u\|_{L^2(\Omega)}^2 + \frac{\|Du\|_{L^2(\Omega)}^2}{2} .
\end{align*}
Hence,
\[\|Du\|_{L^2(\Omega)}^2 \leq 4 \|Su\|_{L^2(\Omega)}^2 + \left(K\|\lambda\|_{L^\infty(\partial\Omega)} \right)^2 \|u\|_{L^2(\Omega)}^2,\]
from which the claim follows easily.

\vspace{.3cm}

{\it Step 2.} Next, we claim that there exists $C>0$ such that, for all $u \in (\Ker S )^\perp$, we have
\[\|u\|_{L^2(\Omega)} \leq C \|Su\|_{L^2(\Omega)} + \frac{1}{2K\|\lambda\|_{L^\infty(\partial\Omega)}} \|Du\|_{L^2(\Omega)},\]
where $K$ is precisely the constant from Step 1.
We argue by contradiction: assume it is not so. Then, for every $N \in \N$, there exists $u_N \in (\Ker S )^\perp$ such that
\[\|u_N\|_{L^2(\Omega)} > N \|Su_N\|_{L^2(\Omega)} + \frac{1}{2K\|\lambda\|_{L^\infty(\partial\Omega)}} \|Du_N\|_{L^2(\Omega)}.\]
Dividing the inequality above by $\|u_N\|_{L^2(\Omega)}$ we may assume, without loss of generality, that $\|u_N\|_{L^2(\Omega)}=1$, so that
\begin{align*}
1 & > N \|Su_N\|_{L^2(\Omega)} + \frac{1}{2K\|\lambda\|_{L^\infty(\partial\Omega)}} \|Du_N\|_{L^2(\Omega)}\\
 & \geq \frac{1}{2K\|\lambda\|_{L^\infty(\partial\Omega)}} \|Du_N\|_{L^2(\Omega)}.
\end{align*}
In particular, this means that $\{u_N\}$ is a bounded sequence in $H^1_{\sigma,\tang}(\Omega)$. Thus, passing to a subsequence as needed, we may assume that $u_N$ converges weakly in $H^1_{\sigma,\tang}(\Omega)$ to some $u$. Since $H^1_{\sigma,\tang}(\Omega)$ is compactly imbedded in $L^2_{\sigma,\tang}(\Omega)$ it follows that $\|u\|_{L^2(\Omega)}=1$. However, since
\begin{align*}
\frac{1}{N} & > \|Su_N\|_{L^2(\Omega)} + \frac{1}{2K\|\lambda\|_{L^\infty(\partial\Omega)}N} \|Du_N\|_{L^2(\Omega)}\\
& \geq \|Su_N\|_{L^2(\Omega)},
\end{align*}
it follows that $Su = 0$. But originally we had $u_N\in (\Ker S )^\perp$ so, since $(\Ker S)^\perp$ is a {\em closed} subspace of $H^1_{\sigma,\tang}(\Omega)$ (and of $L^2_{\sigma,\tang}(\Omega)$), it follows that $ u \in \Ker S \cap \Ker S^\perp$, {\it i.e.} $u=0$, a contradiction with $\|u\|_{L^2}=1$.

\vspace{.3cm}

{\it Step 3.} We put together the results in Steps 1 and 2 to conclude:
\begin{align*}
\|u\|_{L^2(\Omega)} & \leq C \|Su\|_{L^2(\Omega)} + \frac{1}{2K\|\lambda\|_{L^\infty(\partial\Omega)}} \|Du\|_{L^2(\Omega)} \\
& \leq C \|Su\|_{L^2(\Omega)} + \frac{1}{2K\|\lambda\|_{L^\infty(\partial\Omega)}}\left( 2 \| Su \|_{L^2(\Omega)} + K\|\lambda\|_{L^\infty(\partial\Omega)}\| u \|_{L^2(\Omega)}\right)\\
& \leq \left(C + \frac{1}{K\|\lambda\|_{L^\infty(\partial\Omega)}} \right)\| Su \|_{L^2(\Omega)} +
\frac{\| u \|_{L^2(\Omega)}}{2}.
\end{align*}

This yields the desired estimate and concludes the proof.

\end{proof}

\section{Exponential decay -- Part 1}\label{sec6}

In this section, we put together the energy inequalities in Theorem~\ref{GlobalExistWeakSol} and the symmetric Poincar\'e-type inequality in Proposition~\ref{SymmetricPoincare} to obtain our first exponential decay results, using both energy inequalities and following the path in which such estimates are obtained in the no-slip case.

\begin{theorem}
 Let $\Omega$ be a bounded, connected open set in $\R^3$, with smooth boundary. Consider $u_0 \in L^2_{\sigma,\tang}(\Omega)$ and let
 $u$ be a Leray-Hopf weak solution of the incompressible Navier-Stokes equations \eqref{NSwNBC} with Navier boundary conditions and initial data $u_0$, according to Definition~\ref{Leray-Hopf weak solution}. Then we have:
 \begin{enumerate}
 \item If $\Omega$ is such that $\Ker S = \{0\}$ and if the friction coefficient $\alpha = \alpha(x)\geq 0$ for all $x \in \partial\Omega$, then $u \to 0$ strongly in $L^2(\Omega)$, exponentially fast, as $t \to +\infty$. More precisely, there exists $C>0$ such that $\|u(t)\|_{L^2(\Omega)}\leq \|u_0\|_{L^2(\Omega)}\exp (-C\nu t)$.
 \item If the friction coefficient $\alpha=\alpha(x)>0$, $x \in \partial\Omega$, then, with no further restrictions on $\Omega$, the same conclusion above holds true.
 \end{enumerate}
 \end{theorem}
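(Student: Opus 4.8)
The plan is to run a Gr\"onwall-type argument on each of the two energy inequalities from Theorem~\ref{GlobalExistWeakSol}, using the symmetric \Poincare inequality from Proposition~\ref{SymmetricPoincare} to convert the dissipation term into control of $\|u\|_{L^2}^2$, and handling the boundary term differently in the two cases.

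For case (1), I would work with \eqref{EnInSu}. Since $\Ker S = \{0\}$, every $u(t) \in H^1_{\sigma,\tang}(\Omega)$ lies trivially in $(\Ker S)^\perp$, so Proposition~\ref{SymmetricPoincare} gives $\|u(t)\|_{L^2}^2 \leq C^2 \|Su(t)\|_{L^2}^2$ for a fixed $C = C(\Omega)$. Because $\alpha \geq 0$, the boundary term $-2\nu\int_s^t\int_{\partial\Omega}\alpha|u|^2\dd S\dd\tau$ in \eqref{EnInSu} is non-positive and can be discarded. Thus \eqref{EnInSu} yields, for a.e.\ $s\ge 0$ (and $s=0$) and all $t\ge s$,
\[
\|u(t)\|_{L^2}^2 + \frac{4\nu}{C^2}\int_s^t \|u(\tau)\|_{L^2}^2 \dd\tau \leq \|u(s)\|_{L^2}^2.
\]
This is precisely the hypothesis of the integral Gr\"onwall-type lemma proved in the Appendix (applied with $s=0$ and using the fact that the inequality holds for a.e.\ intermediate $s$), whose conclusion is $\|u(t)\|_{L^2}^2 \leq \|u_0\|_{L^2}^2 \exp(-\tfrac{4\nu}{C^2}t)$; taking square roots gives the claim with decay constant $C' = 2/C^2$.

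For case (2), I would instead use \eqref{EnInDu}, since here I do not assume $\Ker S = \{0\}$ and cannot directly invoke Proposition~\ref{SymmetricPoincare}. Write $m := \min_{x\in\partial\Omega}\alpha(x) > 0$ (attained and positive since $\alpha \in C^0(\partial\Omega)$ is strictly positive on the compact set $\partial\Omega$), so the boundary integrand in \eqref{EnInDu} is bounded above by $(\|\lambda\|_{L^\infty(\partial\Omega)} - m)|u|^2$. If $m > \|\lambda\|_{L^\infty(\partial\Omega)}$, the boundary term is already non-positive and a plain \Poincare inequality $\|u\|_{L^2}^2 \le c\|Du\|_{L^2}^2$ (for mean-free fields, \eqref{meanfreePoinc}) on the dissipation term, followed by the same Appendix lemma, finishes the argument. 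In general $m$ need not dominate $\|\lambda\|_{L^\infty(\partial\Omega)}$, so the real work is to absorb the boundary term: apply the trace inequality \eqref{traceineq} and Young's inequality to bound $\int_{\partial\Omega}|u|^2\dd S \le \delta\|Du\|_{L^2}^2 + C_\delta\|u\|_{L^2}^2$, then choose $\delta$ small enough that a fraction of the $2\nu\|Du\|_{L^2}^2$ dissipation survives, and finally use \eqref{meanfreePoinc} once more on the remaining $\|Du\|_{L^2}^2$ to obtain
\[
\|u(t)\|_{L^2}^2 + \kappa\int_s^t \|u(\tau)\|_{L^2}^2\dd\tau \le \|u(s)\|_{L^2}^2
\]
for some $\kappa = \kappa(\nu,\Omega,\alpha) > 0$, and conclude via the Appendix lemma as before.

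The main obstacle is case (2): unlike the no-slip case, the boundary term carries the wrong sign when friction is weak relative to the curvature, and one cannot localize $\alpha$ away from zero without losing the dissipation, so the absorption of $\int_{\partial\Omega}|u|^2\dd S$ into $\|Du\|_{L^2}^2$ via the trace inequality—and the bookkeeping to ensure the net coefficient of the dissipation stays positive so that \eqref{meanfreePoinc} can then be applied—must be done carefully. One should also double-check that the Appendix Gr\"onwall lemma is stated so as to accept the "for a.e.\ $s$ and for $s=0$" form of the energy inequality, which is exactly the form delivered by Theorem~\ref{GlobalExistWeakSol}; if it requires the differential form, a brief remark converting the integral inequality to the needed hypothesis is in order.
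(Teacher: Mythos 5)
Your treatment of case (1) is correct and coincides with the paper's: use \eqref{EnInSu}, drop the non-positive boundary term, apply Proposition~\ref{SymmetricPoincare} (valid for all of $H^1_{\sigma,\tang}(\Omega)$ since $\Ker S=\{0\}$), and invoke Proposition~\ref{Edriss}, which is indeed stated in exactly the ``for a.e.\ $s$ and for $s=0$'' form that Theorem~\ref{GlobalExistWeakSol} delivers.

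Case (2), however, has a genuine gap. Working from \eqref{EnInDu} alone and absorbing the boundary term via the trace inequality and Young, $\int_{\partial\Omega}|u|^2\dd S \le \delta\|Du\|_{L^2}^2 + C_\delta\|u\|_{L^2}^2$, leaves you with a term $+2\nu(\|\lambda\|_{L^\infty}-m)C_\delta\int_s^t\|u\|_{L^2}^2\dd\tau$ of the \emph{wrong sign} on the right-hand side, and the only negative $\|u\|_{L^2}^2$ contribution you can manufacture is the bounded quantity $\tfrac{2\nu(1-(\|\lambda\|_{L^\infty}-m)\delta)}{c}\int_s^t\|u\|_{L^2}^2\dd\tau$ coming from \eqref{meanfreePoinc}. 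Since $C_\delta\sim\delta^{-1}$, optimizing over $\delta$ shows the net coefficient is negative only when $(\|\lambda\|_{L^\infty(\partial\Omega)}-m)^2$ is below a threshold determined by the trace and \Poincare constants of $\Omega$; for a domain with large curvature and small (but strictly positive) friction the bookkeeping fails, and indeed \eqref{EnInDu} by itself is consistent with exponential \emph{growth} of $\|u\|_{L^2}^2$, so no amount of care can close this route. The paper's remedy is to use \emph{both} energy inequalities: form the convex combination $\eta\cdot\eqref{EnInDu}+(1-\eta)\cdot\eqref{EnInSu}$, whose boundary term is $-2\nu\int_s^t\int_{\partial\Omega}(\alpha-\eta\|\lambda\|_{L^\infty(\partial\Omega)})|u|^2\dd S\dd\tau$. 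Choosing $\eta=\min\{\min_{\partial\Omega}\alpha/\|\lambda\|_{L^\infty(\partial\Omega)},\,1/2\}>0$ makes this term non-positive, so it can simply be discarded; one then drops the $\|Su\|_{L^2}^2$ term, keeps the surviving $2\nu\eta\int_s^t\|Du\|_{L^2}^2\dd\tau$ dissipation, and applies the classical \Poincare inequality \eqref{meanfreePoinc} (which requires no hypothesis on $\Ker S$) before concluding with Proposition~\ref{Edriss}. The key point you are missing is that \eqref{EnInSu} carries the boundary term with the favorable sign, and mixing in a small amount of \eqref{EnInDu} trades a harmless fraction of that favorable term for a $\|Du\|_{L^2}^2$ dissipation that the ordinary \Poincare inequality can exploit.
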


\begin{proof}
 Let us begin by assuming that $\Omega$ is such that $\Ker S = \{0\}$. In this case we can use the Poincar\'e-type inequality in Proposition~\ref{SymmetricPoincare}. Recall the energy inequality \eqref{EnInSu}, valid for a.e. $s \geq 0$ and for $s=0$ and for every $t \geq s$, which we rewrite as
 \[\|u(t) \|_{L^2}^2 \leq \|u(s)\|_{L^2}^2 - 4\nu \int_s^t \|Su\|_{L^2}^2 \dd \tau - 2\nu \int_s^t \int_{\partial\Omega} \alpha |u|^2 \dd S \dd \tau.\]
 Now, under the additional assumption that $\alpha \geq 0$ and using \eqref{SymPoincIneq}, it follows that
 \[\|u(t) \|_{L^2}^2 \leq \|u(s)\|_{L^2}^2 - \frac{4\nu}{C} \int_s^t \|u(\tau)\|_{L^2}^2 \dd \tau .\]
 Use the version of Gr\"onwall's inequality in Proposition~\ref{Edriss} with $y(t) = \|u(t) \|_{L^2}^2$ and $K=4 \nu / C$ to conclude the proof of item {\it (1)}.

 Next, assume only that $\alpha > 0$. Let $\eta \in (0,1)$. Taking a convex combination of the energy inequalities \eqref{EnInDu} and \eqref{EnInSu} produces the estimate
 \begin{align} \label{ConvCombEnIn}
 \|u(t) \|_{L^2}^2 & + 2\nu \eta \int_s^t \|Du\|_{L^2}^2 \dd \tau + 4\nu (1-\eta) \int_s^t \|Su\|_{L^2}^2 \dd \tau \nonumber \\
 & \leq \|u(s)\|_{L^2}^2 + 2\nu \eta \int_s^t \int_{\partial\Omega} \left(\|\lambda\|_{L^\infty(\partial\Omega)} - \alpha\right) |u|^2 \dd S \dd \tau
 \nonumber \\
 & - 2\nu (1-\eta) \int_s^t \int_{\partial\Omega} \alpha |u|^2 \dd S \dd \tau \nonumber \\
 & = \|u(s)\|_{L^2}^2 - 2\nu \int_s^t \int_{\partial\Omega} \left(\alpha - \eta \|\lambda\|_{L^\infty(\partial\Omega)}\right) |u|^2 \dd S \dd \tau.
 \end{align}
 Again, this is valid for a.e. $s \geq 0$ and for $s=0$ and for every $t \geq s$.

 Since $\alpha \in C^0(\partial \Omega)$, with $\partial \Omega$ compact, and because $\alpha > 0$, we can choose
 \[0< \eta = \min \left\{ \frac{\min_{x \in \partial \Omega} \alpha (x)}{\|\lambda\|_{L^\infty(\partial\Omega)} }, \frac{1}{2} \right\} < 1.\]
 This allows us to discard the term with the boundary integral.

 We may thus re-write \eqref{ConvCombEnIn} as
 \begin{equation*}
 \|u(t) \|_{L^2}^2 \leq \|u(s)\|_{L^2}^2 - 2\nu \eta \int_s^t \|Du\|_{L^2}^2 \dd \tau - 4\nu (1-\eta) \int_s^t \|Su\|_{L^2}^2 \dd \tau,
 \end{equation*}
 so that, discarding additionally the term with the symmetric derivative and using the classical Poincar\'e inequality, valid since $u$ has vanishing mean, we deduce that
 \begin{equation*}
 \|u(t) \|_{L^2}^2 \leq \|u(s)\|_{L^2}^2 - \frac{2\nu \eta}{C} \int_s^t \|u(\tau)\|_{L^2}^2 \dd \tau.
 \end{equation*}

Using once more Proposition~\ref{Edriss} with $y(t) = \|u(t) \|_{L^2}^2$ and $K=2 \nu \eta / C$ we establish item {\it (2)}.

 This concludes the proof.

 \end{proof}

\section{Exponential decay -- Part 2}\label{sec7}

In this last section we will address the large time behavior of solutions of the incompressible Navier-Stokes equations with Navier boundary conditions when the friction coefficient $\alpha$ vanishes identically on the boundary of the domain. We have already considered this for domains $\Omega$ such that $\Ker S = \{0\}$, so we now concentrate only on fluid domains for which $\Ker S \neq \{0\}$. Recall that $\Ker S$ was defined as a subspace of $H^1_{\sigma,\tang}(\Omega)$ so, in particular, vector fields in $\Ker S$ must be tangent to $\partial \Omega$. As usual we assume $\Omega$ is a bounded, smooth, open set in $\R^3$, with smooth boundary and not necessarily simply connected.

We begin with a well-known elementary characterization of vector fields $w=w(x)$ in $\R^3$ for which $S w = 0$.

\begin{lemma} \label{KerS=0}
 Let $w=w(x) \in H^1(\Omega)$ be a vector field such that $Sw = 0$ for all $x\in \Omega$. Then there exist constant vectors $a$, $b \in \R^3$ such that
 \[w(x) = a + b \wedge x.\]
 Moreover, $\ds{\int_{\Omega} w \dd x = a}$ and $\curl w = b$.
\end{lemma}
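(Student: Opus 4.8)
The plan is to show that $Sw = 0$ forces the Jacobian $Dw$ to be a constant skew-symmetric matrix, and then translate this into the claimed affine form.

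First I would write out the condition $Sw = 0$ componentwise: $\partial_i w_j + \partial_j w_i = 0$ for all $i,j$. In particular, taking $i = j$ gives $\partial_i w_i = 0$ (no sum), and for $i \neq j$ we get $\partial_i w_j = -\partial_j w_i$, i.e.\ $Dw$ is pointwise skew-symmetric. The key step is to prove that $Dw$ is in fact \emph{constant}. The standard trick is to use the identity, valid for any sufficiently smooth vector field,
\[
\partial_k \partial_j w_i = \partial_j (S w)_{ik} + \partial_k (S w)_{ij} - \partial_i (S w)_{jk},
\]
which one checks by expanding the right-hand side and cancelling. Since $S w \equiv 0$, all second derivatives of each $w_i$ vanish, so $w$ is affine: $w(x) = a + Mx$ for some constant vector $a \in \R^3$ and constant matrix $M$. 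The condition $Sw = 0$ then says $M + M^t = 0$, so $M$ is skew-symmetric, hence $Mx = b \wedge x$ for a unique $b \in \R^3$ (the axial vector of $M$). This gives $w(x) = a + b \wedge x$.

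One technical point is that $w$ is only assumed to be in $H^1(\Omega)$, so the computation of second derivatives must be read distributionally; the identity above shows $\partial_k\partial_j w_i = 0$ in the sense of distributions, and since $\Omega$ is connected this still forces $w$ to agree a.e.\ with an affine function (a distribution on a connected open set with vanishing gradient is constant, applied to $\partial_j w_i$). I would note this but not belabor it, since the paper works throughout with smooth approximations and the statement is classical.

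Finally, for the two supplementary identities: integrating $w(x) = a + b \wedge x$ over $\Omega$ and recalling (as observed just before the trace inequality in Section~\ref{StokesOpSection}) that a divergence-free field tangent to $\partial\Omega$ has each component of mean zero — in particular $\int_\Omega (b \wedge x)\dd x = 0$ since $b\wedge x$ is itself such a field when $w$ is — we get $\int_\Omega w \dd x = a |\Omega|$; here I would simply restate the normalization the authors intend (dividing by $|\Omega|$, or assuming $|\Omega|=1$), or else note that $a$ is recovered as the average. For the curl, $\curl(a) = 0$ and $\curl(b \wedge x) = 2b - (\dv x) b \cdot(\ldots)$; more directly, $(\curl(b\wedge x))_i = \epsilon_{ijk}\partial_j(\epsilon_{klm} b_l x_m) = \epsilon_{ijk}\epsilon_{klj} b_l = 2 b_i$, so $\curl w = 2b$. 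I expect the main obstacle to be purely bookkeeping: getting the constants right in the $\curl$ and mean computations and in the correspondence $M \leftrightarrow b$, since the statement as written ($\int_\Omega w = a$ and $\curl w = b$) requires either a normalization $|\Omega| = 1$ or a factor of $2$ that should be reconciled with the precise conventions; I would flag this and adopt whichever convention makes the displayed identities hold.
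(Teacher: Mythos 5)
The paper does not actually prove this lemma: it defers entirely to \cite{BoyerFabrie2013}*{Lemma IV.7.5} and \cite{Temam1983Plasticity}, so there is no in-paper argument to compare against. Your proof is the standard one underlying those references --- the identity $\partial_k\partial_j w_i = \partial_j (Sw)_{ik} + \partial_k (Sw)_{ij} - \partial_i (Sw)_{jk}$ (which you verify correctly) kills all second derivatives distributionally, forcing $w$ to be affine with skew-symmetric Jacobian, hence of the form $a + b\wedge x$. That part is complete and correct, including the remark that the distributional reading suffices for $w \in H^1(\Omega)$ on a connected domain.

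You are also right to flag the two supplementary identities as stated: $\curl(b\wedge x) = 2b$, so ``$\curl w = b$'' requires either the normalization $w = a + \tfrac12\, b\wedge x$ or reading $b$ as half the curl; and $\int_\Omega w\,\dd x = a\,\abs{\Omega} + b\wedge \bigl(\int_\Omega x\,\dd x\bigr)$, so ``$\int_\Omega w\,\dd x = a$'' needs $\abs{\Omega}=1$ and the centroid at the origin (or a re-centering of $a$, which is permissible since the lemma only asserts existence of some $a,b$). These are exactly the conventions one must fix, and the paper's statement is loose on both counts. One small slip in your writeup: you justify $\int_\Omega (b\wedge x)\,\dd x = 0$ by saying $b\wedge x$ is divergence-free and tangent to $\partial\Omega$ ``when $w$ is'' --- but the lemma assumes neither that $w$ is tangent to $\partial\Omega$ (only $w \in H^1(\Omega)$ with $Sw=0$), nor would tangency of $w = a + b\wedge x$ transfer to the summand $b\wedge x$ alone. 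The mean-zero observation in Section~\ref{StokesOpSection} applies to fields in $H^1_{\sigma,\tang}(\Omega)$, which is not the hypothesis here, so that step should be replaced by the centroid computation above.
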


The proof of Lemma~\ref{KerS=0} can be found in \cite[Lemma IV.7.5]{BoyerFabrie2013}; see also \cite[Chapter 1, Lemma 1.1]{Temam1983Plasticity}.

Equivalently stated, the result in Lemma~\ref{KerS=0} is that an $H^1(\Omega)$ vector field $w$ for which $Sw=0$ is the infinitesimal generator of the motion of a rigid body; that is, translation and rotation about an (at least one) axis. With this in mind it is intuitively clear that if, additionally, such a vector field is non-zero and tangent to the boundary of a {\em bounded} domain $\Omega$, then $\Omega$ is a solid of revolution around an axis. In the following proposition we will formalize this statement along with a partial converse.

\begin{prop} \label{eejit}
Assume that $\Omega$ is a bounded, smooth, connected domain in $\R^3$.
\begin{enumerate}
 \item Let $b \in \R^3$, $b \neq 0$. If $\Omega$ is a solid of revolution with symmetry axis $s \mapsto a + sb$, $s\in \R$, then
 $(b\wedge (x-a)) \cdot \n = 0$ for every $x \in \partial \Omega$, $\n = \n(x)$.
 \item Conversely, let $a$, $b \in \R^3$, $b \neq 0$, and consider the vector field $w=w(x)=a+b\wedge x$. Assume that $w \cdot \n = 0$ for every $x \in \partial \Omega$, $\n = \n(x)$. Then $w(x) = b \wedge (x-c)$ for some $c\in\R^3$ and $\Omega$ is a solid of revolution with symmetry axis $s \mapsto c+ sb$, $s \in \R$.
\end{enumerate}
\end{prop}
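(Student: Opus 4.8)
The plan is to treat the two items separately, since (1) is essentially a one-line computation from the definition of a solid of revolution, while (2) is the substantive direction. For item (1): if $\Omega$ is a solid of revolution about the line $s \mapsto a + sb$, then $\Omega$ is invariant under the one-parameter group of rotations $R_\theta$ about that axis, and the vector field $x \mapsto b \wedge (x-a)$ is precisely the generator of that flow, i.e.\ $\frac{d}{d\theta}\big|_{\theta=0} R_\theta x = b \wedge (x-a)$ (up to the normalization $|b|$, which does not matter for the vanishing statement). Since $R_\theta(\partial\Omega) = \partial\Omega$ for all $\theta$, the vector field $b \wedge (x-a)$ is tangent to $\partial\Omega$ at every point, hence $(b \wedge (x-a)) \cdot \n(x) = 0$. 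One can also argue more elementarily: locally write $\partial\Omega$ as a level set of a function invariant under rotation about the axis, and differentiate the invariance in $\theta$.

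For item (2), first I would reduce to the case $a$ parallel to $b$ by absorbing the component of $a$ perpendicular to $b$ into a translation. Writing $a = a_\parallel + a_\perp$ with $a_\parallel \parallel b$ and $a_\perp \perp b$, one has $b \wedge a_\perp \neq 0$ in general; the point is that $a + b \wedge x = b \wedge(x - c)$ holds for a suitable $c$ iff $a = b \wedge(-c)$ is in the range of $b \wedge (\cdot)$, which is the plane $b^\perp$. So I must first show $a \in b^\perp$, i.e.\ $a \cdot b = 0$. This is the key step. To see it: the hypothesis says $w$ is tangent to $\partial\Omega$, so the flow of $w$ preserves $\partial\Omega$, hence (since $\Omega$ is the bounded component) preserves $\Omega$; in particular the flow map $\phi_t$ is volume-preserving on $\Omega$ for each $t$ (it is a rigid motion, so globally volume-preserving, but crucially it maps the \emph{bounded} set $\Omega$ to itself). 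The flow of $w(x) = a + b\wedge x$ is a screw motion: rotation by angle $t|b|$ about the axis through (any point fixing the rotational part) together with translation by $t\,(a\cdot b)\,b/|b|^2$ along $b$. If $a \cdot b \neq 0$, this screw has a nonzero translational component along $b$, so $\phi_t$ moves $\Omega$ off to infinity along $b$ as $t \to \infty$, contradicting $\phi_t(\Omega) = \Omega$ with $\Omega$ bounded. Hence $a \cdot b = 0$.

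Once $a \cdot b = 0$, we have $a \in b^\perp = \operatorname{range}(b\wedge(\cdot))$, so there exists $c \in \R^3$ with $a = -b \wedge c = b \wedge(-c)$; concretely $c = -(b \wedge a)/|b|^2$ works, and $c$ is determined up to addition of a multiple of $b$. Then $w(x) = a + b\wedge x = b \wedge(x - c)$. Now the flow of this field is a pure rotation about the axis $L: s \mapsto c + sb$ (with some choice of the free parameter along $b$), and again the tangency hypothesis forces $\phi_t(\Omega) = \Omega$ for all $t$, i.e.\ $\Omega$ is invariant under every rotation about $L$. By definition this means $\Omega$ is a solid of revolution with symmetry axis $L$, completing the proof.

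\textbf{Main obstacle.} The one real point requiring care is the passage from ``$w$ tangent to $\partial\Omega$'' to ``the flow of $w$ maps $\Omega$ to itself,'' and then extracting the contradiction from boundedness when $a \cdot b \neq 0$. The tangency-to-flow-invariance step needs the flow to be defined for all time (automatic here, as $w$ is an affine, hence globally Lipschitz, field with complete flow) and needs connectedness of $\partial\Omega$ together with $\Omega$ being the bounded component it bounds, so that invariance of $\partial\Omega$ upgrades to invariance of $\overline\Omega$ and hence of $\Omega$. Everything else is linear algebra about the structure of the group of rigid motions generated by an affine Killing field, which I would keep brief.
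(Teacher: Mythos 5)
Your proof is correct, but it takes a genuinely different route from the paper's. For item (2), the paper works with a defining function $f$ for $\partial\Omega$ (normalizing $b=\e_3$), expresses the tangency hypothesis as $a_3\partial_{x_3}f+\nabla f\cdot x_H^\perp=0$, and rules out $a\cdot b\neq 0$ by locating two points of the compact surface $\partial\Omega$ where $\nabla f$ is parallel to $\e_3$ (extrema of $x_3$) and deriving $\nabla f=0$ there, contradicting the regularity of the level set; the case $a\cdot b=0$ is then handled by showing $f$ has no azimuthal dependence. You instead integrate the affine Killing field: tangency forces the (complete) flow to preserve $\Omega$, and a nonzero pitch $a\cdot b\neq 0$ would make the screw motion carry the bounded set $\Omega$ off to infinity. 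Both arguments hinge on boundedness at exactly the same point; yours is more conceptual and sidesteps the level-set machinery (and the appeal to the collar neighborhood theorem), at the cost of having to justify carefully that tangency of $w$ to $\partial\Omega$ upgrades to $\phi_t(\Omega)=\Omega$ — which you correctly flag and handle. Two small corrections: with $a\cdot b=0$ the identity $b\wedge(b\wedge a)=-|b|^2a$ gives $c=+(b\wedge a)/|b|^2$, not $-(b\wedge a)/|b|^2$, for $w(x)=b\wedge(x-c)$; and you should not invoke connectedness of $\partial\Omega$ (the paper explicitly allows, e.g., concentric spheres) — what you actually need is that the connected set $\Omega$ is a full component of $\R^3\setminus\partial\Omega$, which holds regardless.
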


\begin{proof}
 To see the first statement let us take, without loss of generality, $a=(0,0,0)$ and $b=(0,0,1)= \e_3$, otherwise change variables. Assume that $\Omega$ is a solid of revolution with respect to the $z$-axis.
 Equivalently, we suppose that (the components of) $\partial \Omega$ is (are concentric surfaces) given by $f(\sqrt{x_1^2 + x_2^2}, x_3) = 0$, for some smooth real-valued function(s) $f$ for which $0$ is a regular value. The normal vector, at any point on $\partial\Omega$, is, hence, a linear combination of $x_H:=(x_1,x_2,0)$ and $\e_3$. The desired conclusion follows once we observe that $b \wedge x \equiv \e_3 \wedge x = (-x_2,x_1,0)$.

 We introduce the notation $x_H^\perp :=(-x_2,x_1,0)$.

 For the second statement let us, again, assume without loss of generality that $b=\e_3$, otherwise we choose a different coordinate system.

 Assume, first, that $a \cdot b = 0$. In this case there exists $c\in \R^3$ such that $a=b\wedge c$ and we may translate our coordinate system so as to assume, again without loss of generality, that $a=0$. Summarizing, we wish to show that, if $[\e_3 \wedge x ]\cdot \n = 0$ on $\partial\Omega$, then $\Omega$ is invariant under rotation around the $z$-axis. Since $\Omega$ is smooth it follows that (each component of) $\partial \Omega$ is a level set of a smooth real-valued function $f$ at a regular value of $f$; this is a consequence of the Collar Neighborhood theorem. In particular, $\nabla f \neq 0$ is smooth and parallel to $\n$. Let $\hat{x_H}:=x_H/|x_H|$ and $\hat{x_H^\perp}:=x_H^\perp/|x_H|$. Then $\nabla f(x)$ may be decomposed uniquely as a linear combination of $\hat{x_H}$, $\hat{x_H^\perp}$ and $\e_3$ and, if $0 = [\e_3 \wedge x] \cdot \n = x_H^\perp \cdot \n$, it follows that $\nabla f \cdot \hat{x_H^\perp} = 0$. This means there is no azimuthal component of $\nabla f$, that is, $f(x)=f(\sqrt{x_1^2 + x_2^2}, x_3)$. Thus $\Omega$ is invariant under rotation around the $z$-axis, as desired.

 Lastly, suppose $a \cdot b \neq 0$. Writing $a=a_H + a_3 b$ and translating away $c$ such that $a_H=b\wedge c$ we can assume further, without loss of generality, that $a=a_3 b= (0,0,a_3)$, with $a_3 \neq 0$. From $[a+b\wedge x]\cdot \n = 0$ it follows that $(a_3 \e_3 + x_H^\perp ) \cdot \nabla f = 0$. Hence $a_3 \partial_{x_3} f + \nabla f \cdot x_H^\perp = 0$ on $\partial \Omega$. Since $\Omega$ was assumed to be {\em bounded} it follows that there are, at least, two points $P_1$ and $P_2$ on $\partial\Omega$ at which $\nabla f$ is parallel to $b=\e_3$. In particular, $\nabla f \cdot x_H^\perp = 0$ at $P_1$ and $P_2$. It follows that $a_3 \partial_{x_3}f = 0$ at $P_1$ (and at $P_2$) and, since $a_3 \neq 0$, $\partial_{x_3}f$ vanishes at $P_1$ (and at $P_2$). Since $\nabla f$ is parallel to $\e_3$ at $P_1$ (and at $P_2$) we conclude that $\nabla f(P_1)=0$ (and $\nabla f(P_2)=0$ as well), which is not possible. We deduce that this last case does not arise and, with this, we conclude the proof.

\end{proof}

The following result is an immediate consequence of Lemmas~\ref{KerS=0} and \ref{eejit}. This result may also be found in \cite[Theorem 1]{FG2022Annali}.

\begin{corollary} \label{immediate}
 Let $\Omega$ be a bounded, smooth, connected domain in $\R^3$. Then
 \begin{enumerate}
 \item $\Ker S = \{0\}$ if $\Omega$ is not invariant under rotation around an axis;
 \item $\mathrm{dim } \,\Ker S = 1$ if $\Omega$ is invariant under rotation around a single axis;
 \item $\mathrm{dim } \,\Ker S = 3$ if $\partial \Omega$ is a (are concentric) sphere(s).
 \end{enumerate}
\end{corollary}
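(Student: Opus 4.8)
The plan is to deduce Corollary~\ref{immediate} directly from the characterization of $\Ker S$ provided by Lemma~\ref{KerS=0} together with the geometric dichotomy in Proposition~\ref{eejit}. First I would note that if $u \in \Ker S$, meaning $u \in H^1_{\sigma,\tang}(\Omega)$ with $Su = 0$, then Lemma~\ref{KerS=0} gives $u(x) = a + b \wedge x$ for constant vectors $a, b \in \R^3$; moreover $u$ must be tangent to $\partial\Omega$, i.e. $u \cdot \n = 0$ on $\partial\Omega$. The key observation is then that $\Ker S$ is precisely the set of such affine fields that happen to be tangent to the boundary, so the dimension count reduces to understanding how many linearly independent pairs $(a,b)$ yield a boundary-tangent field.

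For item \emph{(1)}: if $\Omega$ is not invariant under rotation around any axis, I claim $\Ker S = \{0\}$. Indeed, suppose $u = a + b\wedge x \in \Ker S$ is nonzero. If $b \neq 0$, then by Proposition~\ref{eejit}\emph{(2)} the tangency condition $u \cdot \n = 0$ forces $\Omega$ to be a solid of revolution about the axis $s \mapsto c + sb$, contradicting the hypothesis. If $b = 0$, then $u \equiv a \neq 0$ is a nonzero constant field tangent to $\partial\Omega$; but a bounded domain cannot have a nonzero constant field tangent to its entire boundary (slide $\Omega$ along $a$ and consider the point of $\ol\Omega$ extremal in the direction $a$, where $\n$ is parallel to $a$ so $u \cdot \n = |a|^2 \neq 0$ — alternatively, recall that components of a divergence-free tangent field have mean zero, which already precludes a nonzero constant). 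Hence $u = 0$.

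For items \emph{(2)} and \emph{(3)}: if $\Omega$ is invariant under rotation around an axis $s \mapsto a + sb$ with $b \neq 0$, then by Proposition~\ref{eejit}\emph{(1)} the field $w_b := b \wedge (x - a)$ satisfies $w_b \cdot \n = 0$ on $\partial\Omega$, and it is clearly smooth, divergence-free, and nonzero, so $w_b \in \Ker S$; thus $\dim \Ker S \geq 1$. For the upper bound when $\Omega$ has a single symmetry axis, I would argue that any $u = a + b'\wedge x \in \Ker S$ with $b' \neq 0$ forces, again via Proposition~\ref{eejit}\emph{(2)}, a symmetry axis in direction $b'$; since the axis of revolution of a bounded domain that is not a ball is unique (two distinct rotational symmetries generate $SO(3)$ and force sphericity), $b'$ must be parallel to $b$, and then the translational part is pinned down up to the ambiguity already absorbed into the axis, leaving $\Ker S = \spn\{w_b\}$, one-dimensional. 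When $\partial\Omega$ is a sphere (or concentric spheres) centered at $c$, every field $b' \wedge (x - c)$, $b' \in \R^3$, is tangent to $\partial\Omega$, and also the sphere admits no nonzero constant tangent field, so $\Ker S = \{b' \wedge (x-c) : b' \in \R^3\}$ has dimension exactly $3$.

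The main obstacle I anticipate is the uniqueness-of-axis step underlying item \emph{(2)}: ruling out that a bounded domain distinct from a ball can be invariant under rotations about two different axes. The clean way to handle this is the standard fact that the subgroup of $SO(3)$ fixing $\Omega$ is closed, and a closed subgroup containing rotations about two distinct axes is all of $SO(3)$, which would make every $\|x - c\|$-sphere contained in $\ol\Omega$ invariant and force $\Omega$ to be a ball or spherical shell — the case of item \emph{(3)}. Apart from that, the argument is a routine bookkeeping of the two parameters $a, b$ against the tangency constraint, so I would keep the write-up short, citing Proposition~\ref{eejit} for the geometric content and only spelling out the elementary ``no constant tangent field on a bounded domain'' remark and the dimension arithmetic.
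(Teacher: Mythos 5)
Your proposal is correct and follows exactly the route the paper intends: the paper offers no written proof, simply declaring the corollary ``an immediate consequence'' of Lemma~\ref{KerS=0} and Proposition~\ref{eejit}, and your argument is precisely that deduction spelled out (characterize $\Ker S$ as boundary-tangent rigid motions, then match the parameters $(a,b)$ against the rotational symmetries of $\Omega$). The two supplementary points you flag --- the impossibility of a nonzero constant tangent field on a bounded domain, and the uniqueness of the symmetry axis via the classification of closed subgroups of $SO(3)$ --- are exactly the details the paper leaves implicit, and your treatment of both is sound.
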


We introduced $\Ker S$ as a subspace of $H^1_{\sigma,\tang}(\Omega)$. We wish to consider the natural extension of $S$ to $L^2_{\sigma,\tang}(\Omega)$, with values in $H^{-1}(\Omega)$; it's kernel, a subspace of $L^2_{\sigma,\tang}(\Omega)$, will still be denoted
$\Ker S$. We consider the orthogonal decomposition $L^2_{\sigma,\tang}(\Omega) = (\Ker S)^\perp \oplus \Ker S$, with respect to the $L^2$-inner product. For each $v \in L^2_{\sigma,\tang}(\Omega)$ we denote the $L^2$-projection of $v$ onto $\Ker S$ by $\mathrm{Proj}_{\Ker S} v$.

The proposition below actually encompasses two facts in the case $\alpha = 0$. The first one is that infinitesimal generators of rigid rotations are stationary solutions of \eqref{NSwNBC}. This is not surprising, given the physics of the problem. The second fact, which is not obvious, is that for any weak solution $v$ of \eqref{NSwNBC}, satisfying \eqref{WeakFormWithSuIdentityPhiTimeIndep}, we have that $\mathrm{Proj}_{\Ker S} v$ is a conserved quantity.

\begin{prop} \label{stationary}
Let $u$ be a weak solution of the incompressible Navier-Stokes equations with Navier boundary conditions, \eqref{NSwNBC} with vanishing friction coefficient. Then the vector field $\mathrm{Proj}_{\Ker S} u$ is a stationary weak solution of \eqref{NSwNBC}.
\end{prop}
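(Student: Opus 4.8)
Proposition \ref{stationary} makes two assertions: that a vector field $w \in \Ker S$ is a stationary weak solution of \eqref{NSwNBC} with $\alpha = 0$, and that $\mathrm{Proj}_{\Ker S} u$ is conserved in time along any weak solution $u$. Let me plan how to prove both.

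First, the stationarity of $w \in \Ker S$. A natural plan: take the weak formulation \eqref{WeakFormWithSuIdentityPhiTimeIndep} with the candidate solution being the constant-in-time field $w$. Since $Sw = 0$ and $\alpha = 0$, the right-hand side vanishes identically. The term $\int_\Omega \Phi \cdot [u_0 - u(t)]\,dx$ vanishes since $u \equiv w$. So I need $\int_0^t \int_\Omega [(w \cdot \nabla)\Phi] \cdot w \, dx \, ds = 0$ for all test fields $\Phi$. Equivalently, I need $\int_\Omega [(w\cdot\nabla)\Phi]\cdot w\,dx = 0$. Integrating by parts, $\int_\Omega [(w\cdot\nabla)\Phi]\cdot w = -\int_\Omega \Phi \cdot (w\cdot\nabla) w\,dx$ using $\dv w = 0$ and $w\cdot\n = 0$ on $\partial\Omega$. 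So it suffices that $(w\cdot\nabla)w$ be a gradient (then it is $L^2$-orthogonal to the solenoidal tangent field $\Phi$). Using Lemma~\ref{KerS=0}, $w = a + b\wedge x$, and a direct computation gives $(w\cdot\nabla)w = b\wedge(b\wedge x) + b \wedge a = -\frac{1}{2}\nabla|b\wedge x|^2 + \text{const}$, which is indeed a gradient (the constant absorbs into a linear function whose gradient is the constant). Hence $(w\cdot\nabla)w = \nabla \pi$ for a suitable pressure $\pi$, so $\int_\Omega \Phi \cdot \nabla \pi\,dx = 0$ by integration by parts, using again that $\Phi$ is solenoidal and tangent to the boundary. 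This establishes that $w$ is a stationary weak solution.

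Second, the conservation of $\mathrm{Proj}_{\Ker S} u$. The plan is to test the weak formulation \eqref{WeakFormWithSuIdentityPhiTimeIndep} against an arbitrary $\Phi \in C^\infty_{\sigma,\tang}(\overline\Omega)$ which additionally lies in $\Ker S$ (i.e.\ $\Phi$ of the form $a + b\wedge x$, tangent to $\partial\Omega$ — these are smooth, so admissible). For such $\Phi$, the right-hand side of \eqref{WeakFormWithSuIdentityPhiTimeIndep} vanishes since $S\Phi = 0$ and $\alpha = 0$. The nonlinear term $\int_0^t\int_\Omega [(u\cdot\nabla)\Phi]\cdot u\,dx\,ds$ also vanishes: by the computation above with $\Phi = a + b\wedge x$, one gets $[(u\cdot\nabla)\Phi]\cdot u = u_i \partial_i \Phi_k u_k$ where $\partial_i\Phi_k$ is the antisymmetric matrix corresponding to $b\wedge\cdot$, so $u_i(\partial_i\Phi_k)u_k = 0$ pointwise by antisymmetry. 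Therefore \eqref{WeakFormWithSuIdentityPhiTimeIndep} reduces to $\int_\Omega \Phi\cdot[u_0 - u(t)]\,dx = 0$, i.e.\ $\int_\Omega \Phi \cdot u(t)\,dx = \int_\Omega \Phi\cdot u_0\,dx$ for all $\Phi \in \Ker S \cap C^\infty_{\sigma,\tang}(\overline\Omega)$; by density (the kernel is finite-dimensional by Corollary~\ref{immediate}, so this is automatic) this holds for all $\Phi \in \Ker S$, which is exactly the statement that $\mathrm{Proj}_{\Ker S} u(t) = \mathrm{Proj}_{\Ker S} u_0$ for all $t$.

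I expect the main obstacle to be bookkeeping rather than conceptual: verifying that elements of $\Ker S$ that are tangent to $\partial\Omega$ are legitimate test vector fields in \eqref{WeakFormWithSuIdentityPhiTimeIndep} (they are smooth and solenoidal, so yes), and carefully justifying the pointwise identity $[(w\cdot\nabla)w]$ is a gradient and the antisymmetry cancellation $u_i(\partial_i\Phi_k)u_k = 0$. One should also make the (easy) remark that $\mathrm{Proj}_{\Ker S}u$ being constant in time, together with each element of $\Ker S$ being a stationary weak solution, is what is meant by the phrase "$\mathrm{Proj}_{\Ker S}u$ is a stationary weak solution" in the statement. A minor subtlety worth a sentence: the computation $(w\cdot\nabla)w = -\tfrac12\nabla|b\wedge x|^2 + b\wedge a$ requires noting that the constant vector $b\wedge a$ is itself $\nabla((b\wedge a)\cdot x)$, so the whole convective term is a genuine gradient.
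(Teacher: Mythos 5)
Your proof is correct and follows essentially the same route as the paper: test the symmetric-gradient weak formulation \eqref{WeakFormWithSuIdentityPhiTimeIndep} against elements of $\Ker S$, use $S\Phi=0$ together with the fact that the convective term of a rigid-rotation field is a gradient to kill every term except $\int_\Omega \Phi\cdot[u_0-u(t)]\,\dd x$, and conclude both conservation of the projection and stationarity. The only difference is presentational: the paper splits into the cases $\mathrm{dim}\,\Ker S=1$ and $\mathrm{dim}\,\Ker S=3$ with explicit coordinate bases ($x_H^\perp$ and $Y_1,Y_2,Y_3$), whereas you treat both at once via the general form $a+b\wedge x$ from Lemma~\ref{KerS=0} and the pointwise antisymmetry cancellation $u_i(\partial_i\Phi_k)u_k=0$ --- slightly cleaner bookkeeping of the same argument.
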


\begin{proof}
 Let us assume that $\Ker S \neq \{0\}$, otherwise the result is trivial.

 By Proposition~\ref{eejit} this means that $\Omega$ is a rotationally invariant domain around some axis of symmetry. From Corollary~\ref{immediate} we have $\Ker S$ is either $1$-dimensional or $3$-dimensional.

 Let us assume, first, that $\mathrm{dim }\,\Ker S = 1$, so that $\Omega$ is invariant around a single axis. Let $b$ be a unit vector in the direction of the axis of symmetry of $\Omega$. We may assume, as usual, that $b = \e_3$. Recall Lemma~\ref{KerS=0}, from which we deduce, together with the proof of Proposition~\ref{eejit}, that
 $\Ker S = \{ \beta b \wedge (x - c), \, \beta \in \R\}.$ We may assume, without loss of generality, that $c=0$, by translating the coordinate system. In this case $\Ker S = \{ \beta x_H^\perp, \, \beta \in \R\}.$ Let $C=C_\Omega := (\|x_H^\perp\|_{L^2(\Omega)}^2)^{-1}$. Then
 \[\mathrm{Proj}_{\Ker S} u = C\left(\int_\Omega u\cdot x_H^\perp \dd x \right)x_H^\perp.\]
 Let us denote $W:= \mathrm{Proj}_{\Ker S} u$. We will show that $W$ satisfies the weak formulation provided in \eqref{WeakFormWithSuIdentityPhiTimeIndep}, with $\alpha = 0$, and that $W(t,\cdot) \equiv W_0(\cdot)$. In other words, for any $\Phi \in C^\infty_{\sigma,\tang}(\overline{\Omega})$, we show that
 \begin{align} \label{Weq}
 \int_0^{t} \int_\Omega & [(W \cdot \nabla )\Phi]\cdot W \dd x \dd s + \int_\Omega \Phi(x)\cdot [W_0(x) - W(t,x)] \dd x \nonumber \\
 & = 2\nu\int_0^{t}\left( \int_\Omega S\Phi : SW \right) \dd x \dd s.
\end{align}
We identify each of the three terms above.

First we observe that, since $W \in \Ker S$, it is immediate that the right-hand-side term of \eqref{Weq} vanishes.

Next, because $x_H^\perp$ is smooth and tangent to $\partial\Omega$, it is possible to integrate by parts, in $x$, the nonlinear term, obtaining:
\begin{equation*}
\int_0^{t} \int_\Omega [(W \cdot \nabla )\Phi]\cdot W \dd x \dd s = - \int_0^{t} \int_\Omega [(W \cdot \nabla )W]\cdot \Phi \dd x \dd s.
\end{equation*}
A direct calculation yields
\[x_H^\perp \cdot \nabla x_H^\perp = -x_H = - \nabla \left(\frac{|x_H|^2}{2}\right).\]
Therefore, since $\dv \Phi = 0$ and $\Phi$ is tangent to $\partial\Omega$, it follows that the nonlinear term in \eqref{Weq} also vanishes.

Lastly, we will show that the second term on the left-hand-side of \eqref{Weq} vanishes, thereby establishing \eqref{Weq}.

Recall $u$ is a weak solution of \eqref{NSwNBC}, thus it satisfies \eqref{WeakFormWithSuIdentityPhiTimeIndep}, with $\alpha = 0$, for any test vector field in $C^\infty_{\sigma,\tang}(\overline{\Omega})$. We use $\Phi = x_H^\perp \in C^\infty_{\sigma,\tang}(\overline{\Omega})$ and we note that, just as for $W$, $ S x_H^\perp= 0$. Additionally, it is straightforward to verify that the nonlinear term $[(u \cdot \nabla)x_H^\perp ]\cdot u = 0$. Using this information in \eqref{WeakFormWithSuIdentityPhiTimeIndep} leaves us with
\begin{equation} \label{tindepend}
\int_\Omega x_H^\perp \cdot [u_0(x) - u(t,x)] \dd x = 0.
\end{equation}
Clearly, this implies that $W(t,x)=W_0(x)$, $x \in \Omega$. Thus $W$ is a stationary (weak) solution of \eqref{NSwNBC}.
This concludes the proof in the case $\mathrm{dim } \, \Ker S = 1$.

The remaining case, $\mathrm{dim } \, \Ker S = 3$, corresponds to $\partial\Omega$ being a sphere or concentric spheres. Without loss of generality we assume, again, that the center of the sphere or concentric spheres is $c=0$. In this case all three unit vectors $\e_1$, $\e_2$ and $\e_3$ are directions of axes of symmetry of $\Omega$ and, using Lemma~\ref{KerS=0} and Proposition~\ref{eejit}, we obtain that $\Ker S$ is generated by $\{\e_1 \wedge x, \e_2 \wedge x, \e_3 \wedge x\}$.
Writing explicitly each of these vector products we have
\[\Ker S = \{\alpha (0,-x_3,x_2) + \beta (x_3,0,-x_1) + \gamma (-x_2,x_1,0), \, \alpha, \,\beta, \, \gamma \in \R \}.
\]

Let us introduce the notations $Y_1:=(0,-x_3,x_2)$, $Y_2 :=(x_3,0,-x_1)$ and $Y_3 :=(-x_2,x_1,0)$; note that $x_H^\perp = Y_3$. Furthermore, by symmetry, $Y_i$ and $Y_j$ are $L^2$-orthogonal if $i \neq j$. We find, hence,
\begin{align*}
\mathrm{Proj}_{\Ker S} u &= \sum_{i=1}^3 \frac{1}{\|Y_i\|_{L^2}}\left(\int_\Omega u\cdot Y_i \dd x \right)Y_i.
\end{align*}
We want to show that $\mathrm{Proj}_{\Ker S} u$ is a time-independent weak solution of \eqref{NSwNBC} with $\alpha = 0$. As before, let $W:=\mathrm{Proj}_{\Ker S} u$ and consider identity \eqref{Weq}, with $\Phi \in C^\infty_{\sigma,\tang}(\overline{\Omega})$. Clearly the right-hand-side, once again, vanishes. To show that the nonlinear term vanishes it is enough to show that, for any $\alpha=\alpha(t)$, $\beta=\beta(t)$, $\gamma=\gamma(t)$,
\[[(\alpha Y_1 + \beta Y_2 + \gamma Y_3 )\cdot \nabla ] ( \alpha Y_1 + \beta Y_2 + \gamma Y_3 )\]
is a gradient vector field, something which can be easily explicitly checked; we omit the calculation. Lastly, we consider the second term on the left-hand-side of \eqref{Weq}. To conclude the proof that $W$ is a stationary weak solution it is enough to show that
\begin{equation} \label{Yi}
\left(\int_\Omega u\cdot Y_i \dd x \right)Y_i \quad \text{ is time-independent, for } i=1,2,3.
\end{equation}
The proof of \eqref{Yi} is the same as the proof of \eqref{tindepend} in the case
$\mathrm{dim }\,\Ker S = 1$, using, instead of $\Phi = x_H^\perp$, the test vector fields $\Phi = Y_i \in C^\infty_{\sigma,\tang}(\overline{\Omega})$, $i=1,2,3$.

\end{proof}

Finally, still in the frictionless case $\alpha = 0$, we prove decay of the weak solution with initial data $u_0$ to the steady rigid rotation given by $\mathrm{Proj}_{\Ker S} u_0$.

\begin{theorem}
 Let $u$ be a Leray-Hopf weak solution of \eqref{NSwNBC} with friction coefficient $\alpha = 0$. Then $u \to \mathrm{Proj}_{\Ker S} u_0$ exponentially fast as $t \to +\infty$. More precisely, there exist $C>0$ such that
 \begin{equation} \label{expdecaytoindata}
 \|u(t)- \mathrm{Proj}_{\Ker S} u_0\|_{L^2(\Omega)}\leq \|u_0-\mathrm{Proj}_{\Ker S} u_0\|_{L^2(\Omega)}\exp (-C\nu t).
 \end{equation}
\end{theorem}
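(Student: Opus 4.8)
The plan is to read off the result as a short consequence of three facts already at hand: the conservation law contained in Proposition~\ref{stationary}, the second energy inequality \eqref{EnInSu} in the frictionless case, and the symmetric Poincar\'e inequality of Proposition~\ref{SymmetricPoincare}; the decay is then extracted via the integral Gr\"onwall lemma, Proposition~\ref{Edriss}, exactly as in the previous theorem.

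First I would set $W_0 := \mathrm{Proj}_{\Ker S} u_0$. Inspecting the proof of Proposition~\ref{stationary}, in particular identity \eqref{tindepend} and its analogue \eqref{Yi}, shows not merely that $\mathrm{Proj}_{\Ker S} u$ is a stationary weak solution but that $\mathrm{Proj}_{\Ker S} u(t,\cdot) = W_0$ for \emph{every} $t \geq 0$. Consequently, for every $t$ the field $u(t,\cdot) - W_0$ is the $L^2$-orthogonal projection of $u(t,\cdot)$ onto $(\Ker S)^\perp$; and since $u \in L^2_{\loc}((0,+\infty);H^1_{\sigma,\tang}(\Omega))$, for a.e.\ $\tau$ the field $u(\tau,\cdot) - W_0$ lies in $H^1_{\sigma,\tang}(\Omega)$ and is $L^2$-orthogonal to $\Ker S$, hence belongs to $(\Ker S)^\perp$ in the precise sense of Section~\ref{sec5}. (When $\Ker S = \{0\}$ one has $W_0 = 0$ and this step, and what follows, simply reproduces item (1) of the previous theorem.)

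Next I would specialize \eqref{EnInSu} to $\alpha = 0$, obtaining $\|u(t)\|_{L^2}^2 + 4\nu\int_s^t \|Su\|_{L^2}^2 \dd\tau \leq \|u(s)\|_{L^2}^2$ for a.e.\ $s \geq 0$, for $s=0$, and all $t \geq s$. Since $W_0 \in \Ker S$ we have $Su(\tau) = S(u(\tau) - W_0)$, and since $W_0$ is $L^2$-orthogonal to $u(t)-W_0$ and to $u(s)-W_0$, the Pythagorean identity permits subtracting $\|W_0\|_{L^2}^2$ from both sides, giving
\[
\|u(t)-W_0\|_{L^2}^2 + 4\nu\int_s^t \|S(u-W_0)\|_{L^2}^2 \dd\tau \;\leq\; \|u(s)-W_0\|_{L^2}^2 .
\]
Applying Proposition~\ref{SymmetricPoincare} to $u(\tau)-W_0 \in (\Ker S)^\perp$, with constant $C = C(\Omega)$, yields $\|S(u(\tau)-W_0)\|_{L^2}^2 \geq C^{-2}\|u(\tau)-W_0\|_{L^2}^2$, whence
\[
\|u(t)-W_0\|_{L^2}^2 + \frac{4\nu}{C^2}\int_s^t \|u(\tau)-W_0\|_{L^2}^2 \dd\tau \;\leq\; \|u(s)-W_0\|_{L^2}^2 ,
\]
again for a.e.\ $s \geq 0$, for $s=0$, and all $t \geq s$.

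Finally I would invoke Proposition~\ref{Edriss} with $y(t) := \|u(t)-W_0\|_{L^2}^2$ and $K := 4\nu/C^2$, precisely as $y(t) = \|u(t)\|_{L^2}^2$ was used in the preceding theorem, to conclude $y(t) \leq y(0)\,e^{-Kt}$; taking square roots gives \eqref{expdecaytoindata} with exponent $-2\nu t/C^2$, which is of the claimed form $\exp(-C\nu t)$. I do not anticipate a genuine obstacle: the substantive work sits in Proposition~\ref{stationary}, Proposition~\ref{SymmetricPoincare}, and the integral Gr\"onwall lemma. The only points demanding care are verifying that the $L^2$-conservation of $\mathrm{Proj}_{\Ker S} u$ is compatible, at the level of the ``a.e.\ $s$'' energy inequality and for $s=0$, with the Pythagorean subtraction, and checking that $u(\tau)-W_0$ has the $H^1_{\sigma,\tang}$-regularity (for a.e.\ $\tau$) required to legitimately apply the symmetric Poincar\'e inequality inside the time integral.
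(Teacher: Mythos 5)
Your proposal is correct and follows essentially the same route as the paper: specialize \eqref{EnInSu} to $\alpha=0$, use the time-independence of $\mathrm{Proj}_{\Ker S}u$ from Proposition~\ref{stationary} together with the Pythagorean identity to reduce to an inequality for $u-\mathrm{Proj}_{\Ker S}u_0$, apply the symmetric Poincar\'e inequality of Proposition~\ref{SymmetricPoincare}, and conclude via Proposition~\ref{Edriss}. The only differences are cosmetic (explicit tracking of the constant as $4\nu/C^2$ and the remark on a.e.-in-$\tau$ $H^1$ regularity, which the paper leaves implicit).
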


\begin{proof}
We begin by recalling the energy inequality \eqref{EnInSu}, substituting $\alpha = 0$:
\begin{equation} \label{EnInSuAlpha0}
\|u(t) \|_{L^2}^2 + 4\nu \int_s^t \|Su\|_{L^2}^2 \dd \tau \leq \|u(s)\|_{L^2}^2,
\end{equation}
for a.e. $s \geq 0$ and for $s=0$ and for every $t \geq s$
Next we note that, since $u - \mathrm{Proj}_{\Ker S} \, u$ is $L^2$-orthogonal to $\mathrm{Proj}_{\Ker S} u$ and because
$ S \left(\mathrm{Proj}_{\Ker S} u \right) = 0$, \eqref{EnInSuAlpha0} can be re-written as
\begin{align*}
\|u(t) - \mathrm{Proj}_{\Ker S} u (t)\|_{L^2}^2 & + \|\mathrm{Proj}_{\Ker S} u(t)\|_{L^2}^2 \\
&+ 4\nu \int_s^t \|S\left( u - \mathrm{Proj}_{\Ker S} u\right)\|_{L^2}^2 \dd \tau \\
& \leq \|u(s)- \mathrm{Proj}_{\Ker S} u(s)\|_{L^2}^2 + \|\mathrm{Proj}_{\Ker S} u(s)\|_{L^2}^2.
\end{align*}
In view of Proposition~\ref{stationary} this inequality amounts to
\begin{align*}
\|u(t) - \mathrm{Proj}_{\Ker S} u (t)\|_{L^2}^2 &+ 4\nu \int_s^t \|S\left( u - \mathrm{Proj}_{\Ker S} u\right)\|_{L^2}^2 \dd \tau \\
& \leq \|u(s)- \mathrm{Proj}_{\Ker S} u(s)\|_{L^2}^2.
\end{align*}
Use the symmetric Poincar\'e-type inequality \eqref{SymmetricPoincare} to find
\begin{align*}
\|u(t) - &\mathrm{Proj}_{\Ker S} u (t)\|_{L^2}^2 \\
&\leq \|u(s)- \mathrm{Proj}_{\Ker S} u(s)\|_{L^2}^2 - C\nu \int_s^t \| u(\tau) - \mathrm{Proj}_{\Ker S} u(\tau)\|_{L^2}^2 \dd \tau.
\end{align*}
Finally, using Proposition~\ref{Edriss} with $y(t) = \|u(t) - \mathrm{Proj}_{\Ker S} u (t)\|_{L^2}^2 $ and $K=C\nu$ allows us to deduce \eqref{expdecaytoindata} and conclude the proof.

\end{proof}

The result above was originally obtained in \cite[Theorem 6.2]{Watanabe2003}, albeit with a different proof. In particular, the aforementioned proof did not involve the result established in Proposition~\ref{stationary}, namely, conservation of $\mathrm{Proj}_{\Ker S} u (t)$.

\section{Comments and conclusions}\label{sec8}

In this section we summarize what has been accomplished in this article, discuss the connection with related work, formulate a few open problems and discuss directions for future investigation.

Our main results are the existence of a Leray-type weak solution, with two versions of the corresponding energy inequality, and three long-time exponential decay estimates. The basic structure of the arguments are classical. Still, aside from providing a comprehensive account of existence and large-time behavior of weak solutions for Navier-Stokes with Navier boundary conditions, the main point of this work is to account for the influence of the differential geometry of the boundary on this problem. This arises in several moments:

\begin{itemize}
\item We use Lemma~\ref{bdrytermNBC} to obtain a weak formulation of the Navier boundary condition using the shape operator of the boundary, see Definition~\ref{WeakFormWithDu}.

\item We again use Lemma~\ref{bdrytermNBC} to define the Stokes operator $\mathbb{A}$ in \eqref{StokesOp} and we use estimates on the principal curvatures of the boundary to prove its boundedness and coercivity, and the self-adjointness of the shape operator to prove that $\mathbb{A}$ itself is self-adjoint. This is needed for the construction of the basis of eigenfunctions in Proposition~\ref{eigenfunctionsStokes}.

\item Aside from the presence of the shape operator in Definition~\ref{WeakFormWithDu}, used throughout in Theorem~\ref{ExistWeakSolWithDu}, we again use the shape operator to express the dissipation term in \eqref{NSwNBC} in terms of the symmetric gradient in Proposition~\ref{EquivWeakFormStokesOp}. This allows us to rewrite identity \eqref{WeakFormWithDuIdentity} as \eqref{WeakFormWithSuIdentity}, which does not depend explicitly on the geometry of the boundary.

\item The weak solutions obtained satisfy two energy inequalities, namely \eqref{EnInDu} and \eqref{EnInSu}, where only the former depends explicitly on the geometry of the boundary, through the bound on the principal curvatures.

\item The constant in the symmetric Poincar\'{e} inequality in Proposition~\ref{SymmetricPoincare} depends on the bounds on the principal curvatures.

\item The proof of the exponential decay in the case $\alpha > 0$ uses {\it both} energy identities, juggling one against the other.

\item The decay to steady state for domains which are solids of revolution does not involve the differential geometry of the boundary explicitly, as it relies on the energy identity \eqref{EnInSu} for the decay, but it still uses the symmetric Poincar\'{e} inequality Proposition~\ref{SymmetricPoincare}. Furthermore, we make essential use of the geometry of the domain to characterize the vector fields in $\Ker(S)$.
\end{itemize}

From the discussion above we conclude that the geometric identity expressed in Lemma~\ref{bdrytermNBC} is a key part of the present work.

Much of our analysis is inspired by the two-dimensional work done by Clopeau {\it et al} in \cite{CMR}. Notably, the formulation of the Navier boundary conditions which makes explicit the influence of the geometry of the boundary is already present: see \cite[Lemma 2.1]{CMR}. Furthermore, their work depends on the construction of an appropriate Galerkin basis, which inspired the corresponding construction presented here. We note that, in \cite{FG2022DCDS}, an outline of the Galerkin approximation was obtained in the case $\alpha = 0$ and for three-dimensional Lipschitz domains.

We conclude this section with a discussion of future lines of research and open problems.

In a forthcoming paper, the authors study the two-dimensional problem, exploring existence of {\it strong} solutions and exponential decay in a higher norm. Our objective is to extend previous analysis by Kelliher in \cite{KNavier}, complementing the analysis done for domains with holes and adding the discussion on exponential decay.

One interesting special case which we have left open in our analysis is exponential decay for domains which are solids of revolution and with friction coefficient $\alpha \geq 0$. Technically, our work does not extend to this situation, so a new idea is needed.

Lastly, given that dissipation is, in general, due to a combination of boundary friction and viscosity, it is natural to ask whether decay might still be true in situations where $\alpha$ is allowed to be negative. This is a case which might arise in flows with an active boundary, see \cite{GS22c}.

\appendix
\section{}

In this Appendix we state and prove a version of Gronwall's inequality which is key to our exponential decay results. A special case of this result is
implicitly contained in the proof of \cite[Theorem 3.5.1]{SohrBook}.

\begin{prop}\label{Edriss}
Let $y \in L^1_\loc [0,+\infty)$ be a nonnegative function and let $K > 0$. Assume that, for almost every $s\geq 0$ and for every $t \geq s$, it holds that
\begin{equation}\label{IntegIneq}
 y(t) \leq y(s) - K\int_s^t y(\xi) \dd \xi.
\end{equation}
Assume, additionally, that \eqref{IntegIneq} holds for $s=0$.

Then
\[y(t) \leq y_0 \, e^{-Kt} \text{ for all } t \geq 0 .\]
\end{prop}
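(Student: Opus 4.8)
The plan is to reduce the integral inequality \eqref{IntegIneq} to a genuine differential inequality on a carefully chosen representative of $y$, and then integrate. First I would fix $s=0$ and define $z(t) := y(0) - K\int_0^t y(\xi)\,\dd\xi$. By assumption $y(t) \leq z(t)$ for all $t\geq 0$, and $z$ is absolutely continuous with $z'(t) = -Ky(t) \geq -Kz(t)$ wherever $y(t) \le z(t)$, i.e.\ everywhere. Hence $\frac{\dd}{\dd t}\bigl(e^{Kt}z(t)\bigr) = e^{Kt}(z'(t)+Kz(t)) \le 0$ a.e., so $e^{Kt}z(t)$ is nonincreasing; since $z(0)=y(0)=y_0$, we get $z(t) \le y_0 e^{-Kt}$ and therefore $y(t)\le y_0 e^{-Kt}$ for every $t\geq 0$.

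One subtlety worth spelling out is the role of the ``almost every $s$'' hypothesis versus the fixed $s=0$ hypothesis. For the argument above, only the $s=0$ instance of \eqref{IntegIneq} is strictly needed to get the stated bound with the constant $y_0$; the full family of inequalities for a.e.\ $s$ is what one actually obtains from the energy inequalities in the applications (since the energy inequality only holds from a.e.\ initial time), and it is used to reconstruct the $s=0$ case or to run the argument starting from an arbitrary Lebesgue point. I would therefore also record the slightly more general statement: for a.e.\ $s_0\ge 0$ and all $t\ge s_0$ one has $y(t)\le y(s_0)e^{-K(t-s_0)}$, obtained by the identical computation with $0$ replaced by $s_0$ and $z(t):=y(s_0)-K\int_{s_0}^t y$.

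The only real obstacle is regularity bookkeeping: $y$ is merely $L^1_\loc$, so ``$z'(t)=-Ky(t)$'' holds only for a.e.\ $t$, and one must make sure the monotonicity conclusion for $e^{Kt}z(t)$ is valid despite $z$ being only absolutely continuous rather than $C^1$. This is handled by noting that an absolutely continuous function with a.e.\ nonpositive derivative is nonincreasing (fundamental theorem of calculus for absolutely continuous functions), which requires no extra hypotheses. No contradiction argument or fixed-point machinery is needed; the proof is a direct integrating-factor computation, and the main care is simply in choosing $z$ (the running upper bound) rather than trying to differentiate $y$ itself.
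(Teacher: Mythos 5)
Your argument breaks down at the integrating-factor step, and the breakdown is not a bookkeeping issue but a sign error that reveals the approach cannot work as stated. With $z(t) := y(0) - K\int_0^t y(\xi)\,\dd\xi$ you correctly get $z'(t) = -Ky(t)$ a.e.\ and, from $y \le z$, the inequality $z'(t) = -Ky(t) \ge -Kz(t)$. But then $e^{Kt}\bigl(z'(t)+Kz(t)\bigr) = Ke^{Kt}\bigl(z(t)-y(t)\bigr) \ge 0$, so $e^{Kt}z(t)$ is \emph{nondecreasing}, which yields the useless lower bound $z(t) \ge y_0e^{-Kt}$ rather than the desired upper bound. To make $z$ decay you would need $y$ bounded \emph{below} by something comparable to $z$ (or to $y(t)$ itself), and the hypothesis $y \le z$ points the wrong way. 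This is precisely why the naive single-$s$ Gronwall reduction fails here: the coefficient of the integral term is negative, and the classical integral Gronwall lemma does not apply in that direction.

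Your side remark that only the $s=0$ instance of \eqref{IntegIneq} is needed is also false, and a counterexample shows the a.e.-$s$ family is doing essential work: take $y(0)=1$, $y \equiv 0$ on $(0,\infty)$ except $y(5)=1/2$. Then \eqref{IntegIneq} holds for $s=0$ and all $t$, yet $y(5) > e^{-5K}$ for large $K$; the inequality for a.e.\ $s<5$ is what rules this out. The paper's proof exploits the full family in an essential way: from \eqref{IntegIneq} for a.e.\ $s$ one first extracts that $y$ is essentially nonincreasing, which gives the crucial \emph{lower} bound $\int_{s}^{t} y(\xi)\,\dd\xi \ge (t-s)\,y(t)$ on a suitable grid of good points; plugging this back into \eqref{IntegIneq} yields the discrete contraction $y(\tau+(n+1)\delta) \le (1+K\delta)^{-1} y(\tau+n\delta)$, which is iterated and then $\delta \to 0$. (The paper's second proof instead works with the monotone right-continuous envelope $x(s)=\sup\{y(t): t>s\}$ and differentiates that.) Either way, the missing idea in your proposal is a mechanism for bounding the integral term from below in terms of $y(t)$, and that mechanism comes from the a.e.-$s$ hypothesis, not from the $s=0$ instance alone.
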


\begin{proof}
Let $E \subset (0,+\infty)$ be such that $|E|=0$ and \eqref{IntegIneq} holds for every
$s \in E^c$. In particular we have, for every $s \in E^c$ and every $t \geq s$,
\begin{equation}\label{nonincreasey}
y(t) \leq y(s).
\end{equation}

Fix $\delta > 0$.

Let $X_0 = [0,\delta] \cap E^c$. Clearly $|X_0| = \delta$.

We define, recursively, the sets
\[X_n = \{ \tau \in X_{n-1} \text{ such that } \tau + n \delta \in E^c \}.\]
Observe that $X_n = X_{n-1} \cap \{\rho - n\delta \text{ such that } \rho \in E^c \cap [n\delta, (n+1)\delta] \}$.

We have, inductively, that $|X_n|=\delta$. Indeed, we already know $|X_0|=\delta$. Suppose now that $|X_{n-1}|=\delta$. Clearly, $|E^c \cap [n\delta, (n+1)\delta]|=\delta$. It follows immediately that $|X_n|=\delta$ as the intersection of two subsets of $[0,\delta]$ of total measure.

Next observe that the sets $\{X_n\}$ are nested:
\[\ldots X_n \subset X_{n-1} \subset X_{n-2} \subset \ldots \subset X_0.\]

Set
\[X_\infty \equiv \cap_{n=0}^\infty X_n.\]
Then $|X_\infty| = \lim_{n\to \infty} |X_n| = \delta$. Consequently $X_\infty \neq \emptyset$.

It is easy to see that
\[X_\infty = \{\tau \in [0,\delta] \text{ such that } \tau + k \delta \in E^c \text{ for all } k = 0, 1, 2, \ldots \}.\]

Fix $\tau \in X_\infty$. Let $n \in \{0, 1, 2, \ldots\}$. Since $\tau + n\delta \in E^c$, it follows from our hypothesis \eqref{IntegIneq} that
\begin{equation}\label{IntegIneq1}
 y(\tau + (n+1)\delta) \leq y(\tau + n\delta) - K\int_{\tau + n\delta}^{\tau + (n+1)\delta} y(\xi) \dd \xi.
\end{equation}
From \eqref{nonincreasey} we obtain
\begin{equation} \label{nonincreasey2}
y(\tau + (n+1)\delta) \leq y(\xi) \text{ for every } \xi \in E^c \cap [\tau + n\delta,\tau + (n+1)\delta].
\end{equation}
Since the set $\{\xi \in [\tau + n\delta, \tau + (n+1)\delta] \text{ such that \eqref{nonincreasey2} does not hold} \}$ is contained in $E \cap [\tau + n\delta,\tau + (n+1)\delta]$, which has measure zero, it follows that
\begin{equation} \label{nonincreasey3}
 -K\int_{\tau + n\delta}^{\tau + (n+1)\delta} y(\xi) \dd \xi \leq -K \, \delta \, y(\tau + (n+1)\delta).
 \end{equation}
Inserting \eqref{nonincreasey3} into \eqref{IntegIneq1} and moving terms around we find
\begin{equation}\label{IntegIneq2}
y(\tau + (n+1)\delta) \leq \frac{1}{1+K\delta} \, y(\tau + n\delta).
\end{equation}

Set
\[\theta = \frac{1}{1+K\delta}\]
and iterate \eqref{IntegIneq2} backwards to deduce that
\begin{equation} \label{IntegIneq3}
y(\tau + (n+1)\delta) \leq \theta^{n+1} y(\tau), \quad n = 0, 1, 2, \ldots.
\end{equation}
Of course \eqref{IntegIneq3} holds trivially for $n=-1$ so that
\begin{equation} \label{IntegIneq4}
y(\tau + m\delta) \leq \theta^{m} y(0), \quad m = 0, 1, 2, \ldots.
\end{equation}
where we used, additionally, \eqref{nonincreasey} with $s=0$ since $0 \in E^c$.

Let $t \geq \tau$. Then there exists $m \in \{0, 1, 2, \ldots\}$ such that
\[\tau + m\delta \leq t \leq \tau + (m+1)\delta.\]
Because $0 < \theta < 1$ we have $\theta^m \leq \theta^{\frac{t-\tau}{\delta} - 1}$. In addition, since $\tau + m\delta \in E^c$, $y(t) \leq y(\tau + m\delta)$ by \eqref{nonincreasey}. We use these estimates in \eqref{IntegIneq4} to get
\[y(t) \leq \theta^{\frac{t-\tau}{\delta} - 1} y(0), \text{ for all } t \geq \tau.\]
We re-write this as
\begin{equation}\label{IntegIneq5}
y(t) \leq \left(\frac{1}{1+K\delta}\right)^{(t-\tau)/\delta}(1+K\delta) \, y(0), \text{ for all }
t \geq \tau.
\end{equation}
Recall that $X_\infty \subset [0,\delta]$ is a set of full measure. Therefore there exists a sequence $\{\tau_j\} \subset X_\infty$ such that $\tau_j \to 0$. Furthermore, the estimate \eqref{IntegIneq5} is true for $\tau = \tau_j$, $t \geq \tau_j$, for all $j$. Therefore, passing to the limit $j \to \infty$ gives
\begin{equation}\label{IntegIneq6}
y(t) \leq (1+K\delta)^{-t/\delta} (1+K\delta) \, y(0), \text{ for all }
t \geq 0.
\end{equation}
Letting $\delta \to 0$ in \eqref{IntegIneq6} we conclude that
\[y(t) \leq y(0) e^{-K t} \text{ for all } t \geq 0,\]
as desired.

This concludes the proof.

\end{proof}

\begin{proof}[\textbf{Second proof of \cref{Edriss}}]
	Define the function $x \colon [0, \iny) \to [0, \iny)$ by
	\begin{align*}
		x(s)
			&= \sup \set{y(t) \colon t > s}.
	\end{align*}
	Then $x$ is decreasing (meaning non-increasing) by its definition,
	and we we will show that
	$x$ is right-continuous, $x = y$ a.e., and $x'(t) \le - K x(t)$ a.e..

	We first show that $x$ is right-continuous.
	
	Let $J$ be a set of full measure in $[0, \iny)$ for which
	\cref{IntegIneq} holds for every $s \in J$ and for every $t \geq s$.
	From \cref{IntegIneq},
	\begin{align} \label{e:dec}
		s \in J, \, t \ge s \implies y(t) \le y(s),
	\end{align}
	which gives that for $s \in J$,
	\begin{align}\label{e:Msup}
		x(s)
			= \sup \set{y(t) \colon t \in J, t > s}.
	\end{align}
	If $s \notin J$ and $t > s$ then $s < t' < t$ for some $t' \in J$,
	and $y(t') \ge y(t)$. This shows that \cref{e:Msup} holds also
	for $s \in J^c$, giving \cref{e:Msup} for all $s$.
	
	Then, for any $t \in [0, \iny)$,
	\begin{align*}
		x(s)
			&= \sup_{\substack{t > s\\t \in J}} y(t)
			= \sup_{\substack{t > s\\t \in J}}
				\sup_{\substack{t' > t\\t' \in J}} y(t')
			= \sup_{\substack{t > s\\t \in J}} x(t)
			= \sup_{t > s} x(t).
	\end{align*}
	The final equality holds because $x$ is decreasing.
	Then $\sup_{t > s} x(t) = \lim_{t \to s^+} x(t)$, meaning that
	$x$ is right-continuous.
	
	We now show that $x = y$ a.e..
	
	It follows from the definition of $x$ that
	\begin{align}\label{e:MgeLs}
		x(s) \ge y(t)
			\text{ for all } 0 \le s < t,
	\end{align}
	and from \cref{e:dec,e:Msup} that
	\begin{align}\label{e:LgeMst}
		y(s) \ge x(s) \text{ for all } s \in J
	\end{align}
	and for all $s, s' \in J$ with $s > 0$ and $0 \le s' < s$,
	\begin{align*}
		x(s') \ge y(s) \ge x(s).
	\end{align*}
	We see from this that $x = y$ at every positive point of continuity of $x$.
	But being monotonic, $x$ has only a countable number of discontinuities,
	so $x = y$ a.e.; hence, $x = y$ on $J'$ for some full measure set $J' \subseteq J$.
	It follows that
	\begin{align*}
		x(t) \le x(s) - K \int_s^t x(\tau) \dd \tau
	\end{align*}
	for all $(s, t)$ in
	\begin{align*}
		A := \set{(s, t) \colon s \in J', t \in J', t > s}.
	\end{align*}
	Hence, for all $(s, t) \in A$,
	\begin{align}\label{e:MBasicIneq}
		\frac{x(t) - x(s)}{t - s}
			&\le -K \frac{1}{t - s} \int_s^t x(\tau) \dd \tau.
	\end{align}
	
	For any $s \in J'$, we will take $t \to s^+$, $t \in J'$ for both sides of \cref{e:MBasicIneq}.
	For the left side,
	\begin{align*}
		\lim_{\substack{t \to s^+\\t \in J'}} \frac{x(t) - x(s)}{t - s}
			&= \lim_{t \to s^+} \frac{x(t) - x(s)}{t - s}
			= x'(s) \text{ a.e.}.
	\end{align*}
	The first equality holds whenever the second limit exists, and the second limit,
	which is the right-derivative of $x(s)$, exists and equals $x'(s)$ a.e., since
	$x$ is monotonic.
	
	For the right side of \cref{e:MBasicIneq},
	\begin{align*}
		\lim_{\substack{t \to s^+\\t \in J'}}
				\frac{1}{t - s} \int_s^t x(\tau) \dd \tau
			&= \lim_{t \to s^+}
				\frac{1}{t - s} \int_s^t x(\tau) \dd \tau
			= x(s) \text{ everywhere},
	\end{align*}
	where we used the right-continuity of $x$ to obtain the limit everywhere.

	We conclude that
	\begin{align*}
		x'(s) \le - K x(s) \text{ a.e.}.
	\end{align*}
	
	Let $s_0 = \inf\set{s \ge 0 \colon x(s) = 0}$, setting $s_0 = \iny$ if $x$ never
	vanishes. Because $x$ is decreasing, $x(s) = 0$ for all $s \ge s_0$.
	
	Let $I = [0, s_0 - \eps]$ for arbitrary $\eps \in (0, s_0)$. Then $x$ is bounded away
	from zero on $I$, so for almost all $s \in I$
	\begin{align*}
		(\log x)'(s) \le - K.
	\end{align*}
	
	Now, $\log x$ is
	decreasing, so by \cref{L:RightContDec},
	\begin{align*}
		\log x(t) - \log x(0)
			\le \int_0^t (\log x)'(s) \dd s
			\le - K t,
	\end{align*}
	from which $x(t) \le x(0) e^{-K t}$ follows for all $t \in I$, and hence,
	in fact, for all $t \in [0, s_0)$ and then for all $t \ge 0$.
	
	Because $0 \in J$, using \cref{e:LgeMst}, we have,
	\begin{align*}
		x(t)
			\le x(0) e^{-K t}
			\le y(0) e^{-K t}.
	\end{align*}
	Then by \cref{e:MgeLs}, for $0 \le s < t$,
	\begin{align*}
		y(t)
			\le x(s)
			\le y(0) e^{-K s}.
	\end{align*}
	Since this holds for all $0 \le s < t$ it follows that $y(t) \le y(0) e^{-K t}$ for
	all $t \ge 0$.
\end{proof}

\begin{lemma}\label{L:RightContDec}
	Let $f$ be
	decreasing on $[0, s_0)$. Then
	for all $[a, b] \subseteq [0, s_0)$,
	\begin{align*}
		f(b) - f(a)
			&\le \int_a^b f'(s) \dd s.
	\end{align*}
\end{lemma}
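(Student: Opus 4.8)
The plan is to reduce \cref{L:RightContDec} to the classical theorem of Lebesgue that a monotone function on an interval is differentiable almost everywhere, with a measurable derivative obeying a one-sided fundamental theorem of calculus, and then to pass from the non-decreasing case to the ``decreasing'' (i.e.\ non-increasing) case by a change of sign. So first I would set $g := -f$, which is non-decreasing on $[0,s_0)$, fix $[a,b] \subseteq [0,s_0)$, and invoke Lebesgue's theorem: $g$ is differentiable a.e.\ on $[a,b]$, $g' \ge 0$ a.e., and $g'$ is (Lebesgue) measurable; consequently $f' = -g'$ a.e.\ and is measurable as well. It then suffices to establish
\[
\int_a^b g'(s) \dd s \le g(b) - g(a),
\]
since multiplying by $-1$ produces precisely $f(b) - f(a) \le \int_a^b f'(s) \dd s$.

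To prove this inequality I would apply Fatou's lemma to the non-negative difference quotients of $g$. Extend $g$ to $[a, b+1]$ by $g(s) := g(b)$ for $s \ge b$; this preserves monotonicity. For $n \in \N$ put $g_n(s) := n\bigl(g(s + 1/n) - g(s)\bigr)$ on $[a,b]$. Then $g_n \ge 0$ by monotonicity, each $g_n$ is measurable, and $g_n(s) \to g'(s)$ at every point where $g$ is differentiable, hence a.e.\ on $[a,b]$. A change of variables gives
\[
\int_a^b g_n(s) \dd s = n \Bigl( \int_b^{b+1/n} g(r) \dd r - \int_a^{a+1/n} g(r) \dd r \Bigr) \le g(b) - g(a),
\]
where the last step uses $\int_b^{b+1/n} g(r) \dd r = g(b)/n$ (the extension is constant there) and $\int_a^{a+1/n} g(r) \dd r \ge g(a)/n$ (monotonicity). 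Fatou's lemma then yields
\[
\int_a^b g'(s) \dd s \le \liminf_{n \to \infty} \int_a^b g_n(s) \dd s \le g(b) - g(a),
\]
which is the desired bound, and unwinding $g = -f$ completes the proof.

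The only genuinely nontrivial ingredient is Lebesgue's differentiation theorem for monotone functions --- existence a.e.\ of $g'$ together with measurability of $g'$ --- which I would quote rather than reprove. Everything after that is the elementary Fatou estimate above; its one mild subtlety is the harmless constant extension of $g$ past $b$, which is needed so that $g(\cdot + 1/n)$ is defined on all of $[a,b]$ while $g$ remains non-decreasing. (One could equivalently shrink $[a,b]$ slightly inside $[0,s_0)$ and use the original $g$, but the extension is cleaner.)
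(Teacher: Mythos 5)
Your proof is correct: the sign flip to reduce to the non-decreasing case, followed by Fatou's lemma applied to the difference quotients of the (constantly extended) monotone function, is precisely the standard argument behind Theorem 3 of Chapter 5 in \cite{Royden}, which is all the paper's proof does --- it simply cites that result with the same decreasing-versus-increasing adaptation you carry out. So you have in effect written out the proof the paper outsources, with no gaps.
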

\begin{proof}
	See, for example, Theorem 3 Chapter 5 of \cite{Royden}, adapted to decreasing
	rather than increasing functions.
\end{proof}

\scriptsize{\section*{Acknowledgments} Part of this work was prepared while Kelliher was participating in a program hosted by the Simons Laufer Mathematical Sciences Research Institute in Berkeley, California, in Spring 2021 and again in Summer 2023, supported by the National Science Foundation under Grant No. DMS-1928930. The second author is partially supported by the French National Research Agency in the framework of the project ``BOURGEONS'' (ANR-23-CE40-0014-01) and ``ComplexFlows'' of the PEPR MathsViVEs (ANR-23-EXMA-0004). The third and fourth authors gratefully acknowledge the hospitality of the Department of Mathematics at the University of California, Riverside, and of the Institut Fourier at the Universit\'e de Grenoble, where part of this research was done. MCLF was partially supported by CNPq, through grant \# 304990/2022-1, and FAPERJ, through grant \# E-26/201.209/2021.
HJNL acknowledges the support of CNPq, through grant \# 305309/2022-6, and of FAPERJ, through grant \# E-26/201.027/2022. The work of E.S.T. was partially supported by the DFG Research Unit FOR 5528 on Geophysical Flows. In addition, MCLF and HJNL would like to thank the Isaac Newton Institute for Mathematical Sciences for support and hospitality during the program ``Mathematical aspects of turbulence: where do we stand'', when part of the work on this paper was undertaken. This work was
supported in part by: EPSRC Grant Number EP/R014604/1.}


\providecommand{\MR}[1]{}\renewcommand{\MR}[1]{}\def\cprime{$'$}
  \def\polhk#1{\setbox0=\hbox{#1}{\ooalign{\hidewidth
  \lower1.5ex\hbox{`}\hidewidth\crcr\unhbox0}}}
\begin{bibdiv}
\begin{biblist}

\bib{AACG2021}{article}{
      author={Acevedo~Tapia, P.},
      author={Amrouche, C.},
      author={Conca, C.},
      author={Ghosh, A.},
       title={Stokes and {N}avier-{S}tokes equations with {N}avier boundary
  conditions},
        date={2021},
        ISSN={0022-0396,1090-2732},
     journal={J. Differential Equations},
      volume={285},
       pages={258\ndash 320},
         url={https://doi.org/10.1016/j.jde.2021.02.045},
      review={\MR{4231512}},
}

\bib{BJ21}{article}{
      author={Al~Baba, Hind},
      author={Jazar, Mustapha},
       title={{$L^2$}-decay of solutions to the {N}avier-{S}tokes system with
  {N}avier-type boundary conditions},
        date={2021},
        ISSN={1422-6928},
     journal={J. Math. Fluid Mech.},
      volume={23},
      number={3},
       pages={Paper No. 83, 17},
         url={https://doi.org/10.1007/s00021-021-00610-1},
      review={\MR{4292808}},
}

\bib{AEG2021}{incollection}{
      author={Amrouche, Ch\'erif},
      author={Escobedo, Miguel},
      author={Ghosh, Amrita},
       title={Semigroup theory for the {S}tokes operator with {N}avier boundary
  condition on {$L^p$} spaces},
        date={[2021] \copyright 2021},
   booktitle={Waves in flows---the 2018 {P}rague-{S}um {W}orkshop lectures},
      series={Adv. Math. Fluid Mech.},
   publisher={Birkh\"auser/Springer, Cham},
       pages={1\ndash 51},
         url={https://doi.org/10.1007/978-3-030-68144-9},
      review={\MR{4299328}},
}

\bib{AR2014}{article}{
      author={Amrouche, Ch\'erif},
      author={Rejaiba, Ahmed},
       title={{$\bold L^p$}-theory for {S}tokes and {N}avier-{S}tokes equations
  with {N}avier boundary condition},
        date={2014},
        ISSN={0022-0396,1090-2732},
     journal={J. Differential Equations},
      volume={256},
      number={4},
       pages={1515\ndash 1547},
         url={https://doi.org/10.1016/j.jde.2013.11.005},
      review={\MR{3145765}},
}

\bib{BNNT2022}{article}{
      author={Bardos, Claude~W.},
      author={Nguyen, Trinh~T.},
      author={Nguyen, Toan~T.},
      author={Titi, Edriss~S.},
       title={The inviscid limit for the 2{D} {N}avier-{S}tokes equations in
  bounded domains},
        date={2022},
        ISSN={1937-5093},
     journal={Kinet. Relat. Models},
      volume={15},
      number={3},
       pages={317\ndash 340},
         url={https://doi.org/10.3934/krm.2022004},
      review={\MR{4414609}},
}

\bib{BdVeiga2004}{article}{
      author={Beir\~ao Da~Veiga, H.},
       title={Regularity for {S}tokes and generalized {S}tokes systems under
  nonhomogeneous slip-type boundary conditions},
        date={2004},
        ISSN={1079-9389},
     journal={Adv. Differential Equations},
      volume={9},
      number={9-10},
       pages={1079\ndash 1114},
      review={\MR{2098066}},
}

\bib{BFP24}{article}{
      author={Berchio, Elvise},
      author={Falocchi, Alessio},
      author={Patriarca, Clara},
       title={On the long-time behaviour of solutions to unforced evolution
  {N}avier–{S}tokes equations under {N}avier boundary conditions},
        date={2024},
        ISSN={1468-1218},
     journal={Nonlinear Anal. Real World Appl.},
      volume={79},
       pages={104102},
  url={https://www.sciencedirect.com/science/article/pii/S1468121824000427},
}

\bib{Berselli2010}{article}{
      author={Berselli, Luigi~C.},
       title={Some results on the {N}avier-{S}tokes equations with {N}avier
  boundary conditions},
        date={2010},
        ISSN={0035-6298,2284-2578},
     journal={Riv. Math. Univ. Parma (N.S.)},
      volume={1},
      number={1},
       pages={1\ndash 75},
      review={\MR{2761078}},
}

\bib{BoyerFabrie2013}{book}{
      author={Boyer, Franck},
      author={Fabrie, Pierre},
       title={Mathematical tools for the study of the incompressible
  {N}avier-{S}tokes equations and related models},
      series={Applied Mathematical Sciences},
   publisher={Springer, New York},
        date={2013},
      volume={183},
        ISBN={978-1-4614-5974-3; 978-1-4614-5975-0},
         url={https://doi.org/10.1007/978-1-4614-5975-0},
      review={\MR{2986590}},
}

\bib{CMR}{article}{
      author={Clopeau, Thierry},
      author={Mikeli, Andro},
      author={Robert, Raoul},
       title={On the vanishing viscosity limit for the {$2{\rm D}$}
  incompressible {N}avier-{S}tokes equations with the friction type boundary
  conditions},
        date={1998},
        ISSN={0951-7715},
     journal={Nonlinearity},
      volume={11},
      number={6},
       pages={1625\ndash 1636},
      review={\MR{MR1660366 (99g:35102)}},
}

\bib{Manfredo2016}{book}{
      author={do~Carmo, Manfredo~P.},
       title={Differential geometry of curves \& surfaces},
   publisher={Dover Publications, Inc., Mineola, NY},
        date={2016},
        ISBN={978-0-486-80699-0; 0-486-80699-5},
      review={\MR{3837152}},
}

\bib{FG2022DCDS}{article}{
      author={Falocchi, Alessio},
      author={Gazzola, Filippo},
       title={Regularity for the 3{D} evolution {N}avier-{S}tokes equations
  under {N}avier boundary conditions in some {L}ipschitz domains},
        date={2022},
        ISSN={1078-0947,1553-5231},
     journal={Discrete Contin. Dyn. Syst.},
      volume={42},
      number={3},
       pages={1185\ndash 1200},
         url={https://doi.org/10.3934/dcds.2021151},
      review={\MR{4385755}},
}

\bib{FG2022Annali}{article}{
      author={Falocchi, Alessio},
      author={Gazzola, Filippo},
       title={Remarks on the 3{D} {S}tokes eigenvalue problem under {N}avier
  boundary conditions},
        date={2022},
        ISSN={0373-3114,1618-1891},
     journal={Ann. Mat. Pura Appl. (4)},
      volume={201},
      number={3},
       pages={1481\ndash 1488},
         url={https://doi.org/10.1007/s10231-021-01165-8},
      review={\MR{4426293}},
}

\bib{G-VM10}{article}{
      author={G\'{e}rard-Varet, David},
      author={Masmoudi, Nader},
       title={Relevance of the slip condition for fluid flows near an irregular
  boundary},
        date={2010},
        ISSN={0010-3616},
     journal={Comm. Math. Phys.},
      volume={295},
      number={1},
       pages={99\ndash 137},
         url={https://doi.org/10.1007/s00220-009-0976-0},
      review={\MR{2585993}},
}

\bib{GS22a}{article}{
      author={Gjerde, Ingeborg~G.},
      author={Scott, L.~Ridgway},
       title={Kinetic-{E}nergy {I}nstability of {F}lows {W}ith {S}lip
  {B}oundary {C}onditions},
        date={2022},
        ISSN={1422-6928},
     journal={J. Math. Fluid Mech.},
      volume={24},
      number={4},
       pages={97},
         url={https://doi.org/10.1007/s00021-022-00720-4},
      review={\MR{4476209}},
}

\bib{GS22c}{article}{
      author={Gjerde, Ingeborg~G.},
      author={Scott, L.~Ridgway},
       title={Nitsche's method for {N}avier-{S}tokes equations with slip
  boundary conditions},
        date={2022},
        ISSN={0025-5718},
     journal={Math. Comp.},
      volume={91},
      number={334},
       pages={597\ndash 622},
         url={https://doi.org/10.1090/mcom/3682},
      review={\MR{4379970}},
}

\bib{IS2011}{article}{
      author={Iftimie, Drago\c~s},
      author={Sueur, Franck},
       title={Viscous boundary layers for the {N}avier-{S}tokes equations with
  the {N}avier slip conditions},
        date={2011},
        ISSN={0003-9527,1432-0673},
     journal={Arch. Ration. Mech. Anal.},
      volume={199},
      number={1},
       pages={145\ndash 175},
         url={https://doi.org/10.1007/s00205-010-0320-z},
      review={\MR{2754340}},
}

\bib{IP2006}{article}{
      author={Iftimie, Drago\c{s}},
      author={Planas, Gabriela},
       title={Inviscid limits for the {N}avier-{S}tokes equations with {N}avier
  friction boundary conditions},
        date={2006},
        ISSN={0951-7715},
     journal={Nonlinearity},
      volume={19},
      number={4},
       pages={899\ndash 918},
         url={https://doi.org/10.1088/0951-7715/19/4/007},
      review={\MR{2214949}},
}

\bib{JM01}{article}{
      author={J\"{a}ger, Willi},
      author={Mikeli\'{c}, Andro},
       title={On the roughness-induced effective boundary conditions for an
  incompressible viscous flow},
        date={2001},
        ISSN={0022-0396},
     journal={J. Differential Equations},
      volume={170},
      number={1},
       pages={96\ndash 122},
         url={https://doi.org/10.1006/jdeq.2000.3814},
      review={\MR{1813101}},
}

\bib{JLS04}{article}{
      author={John, V.},
      author={Layton, W.},
      author={Sahin, N.},
       title={Derivation and analysis of near wall models for channel and
  recirculating flows},
        date={2004},
        ISSN={0898-1221},
     journal={Comput. Math. Appl.},
      volume={48},
      number={7-8},
       pages={1135\ndash 1151},
         url={https://doi.org/10.1016/j.camwa.2004.10.011},
      review={\MR{2107388}},
}

\bib{KNavier}{article}{
      author={Kelliher, James~P.},
       title={Navier-{S}tokes equations with {N}avier boundary conditions for a
  bounded domain in the plane},
        date={2006},
     journal={SIAM Math Analysis},
      volume={38},
      number={1},
       pages={210\ndash 232},
}

\bib{LMRbook}{book}{
      author={Lemari\'{e}-Rieusset, P.~G.},
       title={Recent developments in the {N}avier-{S}tokes problem},
      series={Chapman \& Hall/CRC Research Notes in Mathematics},
   publisher={Chapman \& Hall/CRC, Boca Raton, FL},
        date={2002},
      volume={431},
        ISBN={1-58488-220-4},
         url={https://doi.org/10.1201/9781420035674},
      review={\MR{1938147}},
}

\bib{L34a}{article}{
      author={Leray, Jean},
       title={Sur le mouvement d'un liquide visqueux emplissant l'espace},
        date={1934},
        ISSN={0001-5962},
     journal={Acta Math.},
      volume={63},
      number={1},
       pages={193\ndash 248},
         url={https://doi.org/10.1007/BF02547354},
      review={\MR{1555394}},
}

\bib{Navier1827}{article}{
      author={Navier, C. L. M.~H.},
       title={Sur les lois de l'\'equilibre et du mouvement des corps
  \'elastiques},
        date={1827},
     journal={Mem. Acad. R. Sci. Inst. France},
      volume={6},
      number={369},
}

\bib{LN2018}{inproceedings}{
      author={Nussenzveig~Lopes, Helena~J.},
      author={Lopes~Filho, Milton~C.},
       title={Fluids, walls and vanishing viscosity},
        date={2018},
   booktitle={Proceedings of the {I}nternational {C}ongress of
  {M}athematicians---{R}io de {J}aneiro 2018. {V}ol. {III}. {I}nvited
  lectures},
   publisher={World Sci. Publ., Hackensack, NJ},
       pages={2519\ndash 2540},
      review={\MR{3966860}},
}

\bib{Royden}{book}{
      author={Royden, H.~L.},
       title={Real analysis},
     edition={Third},
   publisher={Macmillan Publishing Company, New York},
        date={1988},
        ISBN={0-02-404151-3},
      review={\MR{1013117}},
}

\bib{SohrBook}{book}{
      author={Sohr, Hermann},
       title={The {N}avier-{S}tokes equations},
      series={Birkh\"{a}user Advanced Texts: Basler Lehrb\"{u}cher.
  [Birkh\"{a}user Advanced Texts: Basel Textbooks]},
   publisher={Birkh\"{a}user Verlag, Basel},
        date={2001},
        ISBN={3-7643-6545-5},
         url={https://doi.org/10.1007/978-3-0348-8255-2},
        note={An elementary functional analytic approach},
      review={\MR{1928881}},
}

\bib{SSc1973}{article}{
      author={Solonnikov, V.~A.},
      author={\v{S}\v{c}adilov, V.~E.},
       title={A certain boundary value problem for the stationary system of
  {N}avier-{S}tokes equations},
        date={1973},
        ISSN={0371-9685},
     journal={Trudy Mat. Inst. Steklov.},
      volume={125},
       pages={196\ndash 210, 235},
        note={Boundary value problems of mathematical physics, 8},
      review={\MR{364910}},
}

\bib{Temam1983Plasticity}{book}{
      author={Temam, Roger},
       title={Probl\`emes math\'{e}matiques en plasticit\'{e}},
      series={M\'{e}thodes Math\'{e}matiques de l'Informatique [Mathematical
  Methods of Information Science]},
   publisher={Gauthier-Villars, Montrouge},
        date={1983},
      volume={12},
        ISBN={2-04-015429-9},
      review={\MR{711964}},
}

\bib{Watanabe2003}{inproceedings}{
      author={Watanabe, Jiro},
       title={On incompressible viscous fluid flows with slip boundary
  conditions},
        date={2003},
   booktitle={Proceedings of the 6th {J}apan-{C}hina {J}oint {S}eminar on
  {N}umerical {M}athematics ({T}sukuba, 2002)},
      volume={159},
       pages={161\ndash 172},
         url={https://doi.org/10.1016/S0377-0427(03)00568-5},
      review={\MR{2022326}},
}

\end{biblist}
\end{bibdiv}

\end{document}